\newtheorem{proposition}{Proposition}[section]
\newtheorem{theorem}[proposition]{Theorem}
\newtheorem{lemma}[proposition]{Lemma}
\newtheorem{corollary}[proposition]{Corollary}
\theoremstyle{definition}
\newtheorem{definition}[proposition]{Definition}
\theoremstyle{remark}
\newtheorem{remark}[proposition]{Remark}
\newtheorem{example}[proposition]{Example}
\def\Q{\mathbb{Q}}
\def\C{\mathcal{C}}
\def\R{\mathbb{R}}
\def\Z{\mathbb{Z}}
\def\hfs{S(S^1 \times D^2)}
\def\hfsone{S(S^1 \times S^2 ; R)}
\def\st{S^1\times \mathbb{R}^2}
\def\fig#1{\raisebox{-2.2ex}{\includegraphics[height=5.2ex]{#1}}}
\begin{document}

\title[On the $S^1\times S^2$ HOMFLY-PT invariant and Legendrian links]{On the $S^1\times S^2$ HOMFLY-PT invariant and Legendrian links}
\author{Mikhail Lavrov}
\address{Carnegie Mellon University, Pittsburgh, PA 15213}
\email{mlavrov@andrew.cmu.edu}

\author{Dan Rutherford}
\address{University of Arkansas, Fayetteville, AR 72701}
\email{drruther@uark.edu}

\begin{abstract}   
In \cite{GZ}, Gilmer and Zhong established the existence of an invariant for links in $S^1\times S^2$ which is a rational function in variables $a$ and $s$ and satisfies the HOMFLY-PT skein relations.  We give formulas for evaluating this invariant in terms of a standard, geometrically simple basis for the HOMFLY-PT skein module of the solid torus.  This allows computation of the invariant for arbitrary links in $S^1\times S^2$ and shows that the invariant is in fact a Laurent polynomial in $a$ and $z= s -s^{-1}$.  Our proof uses connections between HOMFLY-PT skein modules and invariants of Legendrian links.  As a corollary, we extend HOMFLY-PT polynomial estimates for the Thurston-Bennequin number to Legendrian links in  $S^1\times S^2$ with its tight contact structure.  
\end{abstract}

\maketitle

\section{Introduction}

The main results in this paper provide formulas for the computation of the HOMFLY-PT invariant of links in $S^1\times S^2$.  In order to give a more detailed overview it is convenient to first introduce some notations.

Let $M$ be a smooth, oriented $3$-manifold and denote by $\mathcal{L}(M)$ the set of isotopy classes of oriented, framed links in $M$ including the empty link.  If $M$ has boundary we only consider links that are disjoint from $\partial M$.

\begin{definition}  Let $R$ be a commutative ring
containing the ring $\Z[a^{\pm 1}, z^{\pm 1}]$.  The {\it HOMFLY-PT skein module of $M$}, $S(M ; R)$, is the quotient of the free $R$-module generated by $\mathcal{L}(M)$ by the submodule generated by the HOMFLY-PT skein relations:
\begin{equation*}
\tag{i}\fig{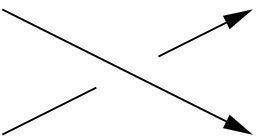} - \fig{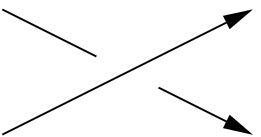} = z \fig{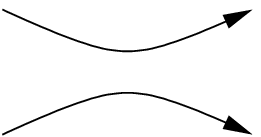},
\end{equation*}
\begin{equation*}
\tag{ii}\fig{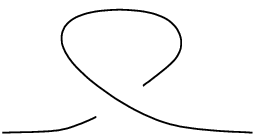} = a \raisebox{-.6ex}{\includegraphics[height=1.2ex]{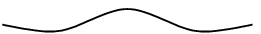}} \,\,\, \mbox{and} \,\,\, \fig{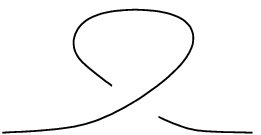} = a^{-1} \raisebox{-.6ex}{\includegraphics[height=1.2ex]{HSR7.eps}},
\end{equation*}
\begin{equation*}
\tag{iii}\displaystyle  L \sqcup \fig{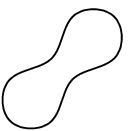} = \frac{a-a^{-1}}{z} L.
\end{equation*}
The links appearing in (i) and (ii) are assumed to agree outside of a ball where they differ as pictured.  In (iii), the unknotted component should sit in a ball that is disjoint from the rest of $L$. In all figures, links have the blackboard framing.    
\end{definition}

The skein module provides the universal generalization of the HOMFLY-PT invariant for links in $M$ in the following sense:  Any link invariant, $\mathcal{L}(M) \rightarrow N$ with values in an $R$-module, $N$, and satisfying the skein relations (i), (ii), and (iii) may be factored through the obvious map $\mathcal{L}(M) \rightarrow S(M;R)$.  Conversely, any element $g \in \mathit{Hom}_R( S(M;R), N)$ provides an $N$-valued HOMFLY-PT invariant for framed links in $M$.

Throughout this paper we will fix the coefficient ring $R \subset \Q(a,s)$ to be the smallest subring containing the Laurent polynomials $\Z[a^{\pm 1}, s^{\pm 1}]$ as well as the denominators $s^{k} - s^{-k}$, for $k \geq 1$.  Here, the inclusion of $\Z[a^{\pm 1}, z^{\pm 1}]$ into the field of rational functions in $a$ and $s$ arises from the identification, $z = s - s^{-1}$.  With this choice of $R$ fixed, we will omit the coefficient ring from our notation for skein modules.

Gilmer and Zhong showed in \cite{GZ} that, over a suitable ring $R'$ with $R \subsetneqq R' \subsetneqq \Q(a, s)$, the skein module of $S^1 \times S^2$ is freely generated by the class of the empty link $\emptyset$.    As an immediate consequence, it is observed that:
\begin{theorem}[\cite{GZ}]  There exists a unique invariant of framed links $f: \mathcal{L}(S^1\times S^2) \rightarrow \Q(a, s)$ satisfying the HOMFLY-PT skein relations and $f(\emptyset) = 1$.
\end{theorem}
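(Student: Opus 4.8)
The statement is a formal consequence of the Gilmer--Zhong computation together with the universal property of the skein module recalled above, and the plan is simply to spell out the bookkeeping. First I would observe that the universal property stated for the ring $R$ holds verbatim over any commutative ring containing $\Z[a^{\pm 1}, z^{\pm 1}]$; in particular it holds over the Gilmer--Zhong ring $R'$. Since $R' \subset \Q(a,s)$, the field $\Q(a,s)$ is an $R'$-module, and the skein relations (i)--(iii) make sense for $\Q(a,s)$-valued invariants because their coefficients lie in $R' \subset \Q(a,s)$.

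For existence: by the Gilmer--Zhong theorem the class $[\emptyset]$ freely generates $S(S^1\times S^2; R')$, so for every $L \in \mathcal{L}(S^1\times S^2)$ there is a unique $c_L \in R'$ with $[L] = c_L\,[\emptyset]$. Define $f(L) := c_L$, viewed in $\Q(a,s)$. The assignment $L \mapsto [L]$ respects the skein relations at the level of the module, and since $[\emptyset]$ is a free generator, comparing coefficients of $[\emptyset]$ shows that $f$ itself satisfies the skein relations; moreover $f(\emptyset) = 1$ since $[\emptyset] = 1\cdot[\emptyset]$.

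For uniqueness: let $f' : \mathcal{L}(S^1\times S^2) \to \Q(a,s)$ be any invariant satisfying the skein relations with $f'(\emptyset) = 1$. By the universal property over $R'$, there is an $R'$-module homomorphism $\overline{f'} : S(S^1\times S^2; R') \to \Q(a,s)$ with $f' = \overline{f'} \circ \pi$, where $\pi$ is the canonical map $\mathcal{L}(S^1\times S^2) \to S(S^1\times S^2; R')$. Freeness of $S(S^1\times S^2; R')$ on $[\emptyset]$ forces $\overline{f'}$ to be determined by $\overline{f'}([\emptyset]) = f'(\emptyset) = 1$, whence $f' = f$.

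The only point that is not completely automatic is the need to transport the universal property from $R$ to the larger ring $R'$ — which costs nothing, either by repeating the same argument over $R'$ or via the base-change isomorphism $S(M; R') \cong S(M; R) \otimes_R R'$ — and to note that the values of $f$, a priori only in $R'$, are being regarded inside $\Q(a,s)$. All of the substantive content is the Gilmer--Zhong freeness result, which I take as given, so I expect no real obstacle beyond this coefficient-ring housekeeping.
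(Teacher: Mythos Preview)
Your proposal is correct and matches the paper's own treatment: the paper does not give a separate proof of this theorem but simply states it as an ``immediate consequence'' of the Gilmer--Zhong freeness result, and your argument spells out precisely that immediate consequence via the universal property of the skein module.
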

The proof does not give a practical method for computing $f(L)$.  In the present paper, we provide a direct approach which we now describe.

The field of rational functions $\Q(a,s)$ is clearly an $R$-module.  Hence, this $S^1\times S^2$ HOMFLY-PT invariant $f$ extends to an $R$-module homomorphism $S(S^1\times S^2) \rightarrow \Q(a,s)$ which we also denote as $f$.  Following \cite{GZ}, the inclusion $i: S^1 \times D^2 \hookrightarrow S^1 \times S^2$ induces a surjection of skein modules, $i_* : \hfs \rightarrow S( S^1 \times S^2) $.  We let $F : S(S^1\times D^2) \rightarrow \Q(a,s)$ denote the composition $F = f \circ i_*$.  By a slight abuse of terminology, we also refer to $F$ as the {\it $S^1 \times S^2$ HOMFLY-PT invariant}.

  Turaev \cite{Tu} showed that the skein module $\hfs$ has a natural basis, $\{A_\lambda A_{-\mu} \}$, indexed by pairs of partitions, $\lambda$ and $\mu$.  The $A_\lambda A_{-\mu}$ are geometrically simple links (see Section \ref{sec:TuraevB}),  
and it is straightforward to write an arbitrary link $L \subset S^1\times D^2$ as a linear combination of the $A_\lambda A_{-\mu}$ by applying the HOMFLY-PT skein relations. Therefore, for computation it suffices to evaluate $F(A_\lambda A_{-\mu})$.  

The value of $F$ on the links $A_\lambda A_{-\mu}$ turns out to be related to Legendrian knot theory.  Each of the links $A_\lambda A_{-\mu}$ may be viewed as a Legendrian link with respect to the standard contact structure on $S^1 \times D^2$.  In this setting, there are invariants of Legendrian links known as ruling polynomials defined by Chekanov and Pushkar in \cite{ChP}.  The $2$-graded ruling polynomials of the basis links $A_\lambda A_{-\mu}$ are computed in detail in \cite{R2}, and the results are recalled in Theorem \ref{thm:R2Basic} below.

\begin{theorem} \label{the:HOMFLY} Let $F: \hfs \rightarrow \Q(a,s)$ denote 
the $S^1\times S^2$ HOMFLY-PT invariant.  For any partitions $\lambda$ and $\mu$ we have 
\[
F(A_\lambda A_{-\mu}) = R^2_{A_\lambda A_{-\mu}}(z)
\]
 where $R^2$ denotes the $2$-graded ruling polynomial.
\end{theorem}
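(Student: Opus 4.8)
The plan is to leverage the characterization of $F$ that comes from the Gilmer--Zhong freeness result. Over their ring $R'$, the module $S(S^1\times S^2)$ is free of rank one on the class of $\emptyset$, so $F$ is the unique $R'$-linear functional on $\hfs$ with $F(\emptyset)=1$ that annihilates $\ker\bigl(i_*\colon \hfs \to S(S^1\times S^2)\bigr)$. The first step is therefore to describe that kernel concretely. Presenting $S^1\times S^2$ as $0$-framed surgery on an unknot $U\subset S^3$, the inclusion $S^1\times D^2\hookrightarrow S^1\times S^2$ is identified with the inclusion of $S^3\setminus U$, and $\ker i_*$ is generated by handle-slide (Fenn--Rourke type) relations over the $0$-framed circle $U$; this is essentially the content of \cite{GZ}. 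I would rewrite these relations in the Turaev basis, producing a family of linear recursions among the unknown values $F(A_\lambda A_{-\mu})$, and isolate the basic recursion which, together with $F(\emptyset)=1$ and $R$-linearity, pins down $F$ on the whole basis. The vanishing of $a$ in the final answer is a feature to be verified here: the framing corrections contributed by skein relation (ii) must cancel when one passes through the surgery presentation, something that is false for a general $L$ but should hold for the Turaev basis.

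The second step brings in the Legendrian geometry advertised in the introduction. The basis links $A_\lambda A_{-\mu}$ have canonical front diagrams realizing them as Legendrian links in $(\st,\xi_{std})$, which embeds contactly into $(S^1\times S^2,\xi_{tight})$. On the combinatorial side, the $2$-graded ruling polynomial obeys HOMFLY-PT--type skein relations, and by Theorem \ref{thm:R2Basic} the quantity $R^2_{A_\lambda A_{-\mu}}(z)$ is given by an explicit formula in $\lambda$ and $\mu$ that already contains no $a$. There are then two ways to close the argument. The more computational one is to check that this explicit formula satisfies exactly the handle-slide recursion extracted in Step 1 and equals $1$ at the empty partitions; the uniqueness half of the Gilmer--Zhong theorem then forces it to equal $F(A_\lambda A_{-\mu})$. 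The more conceptual one — and the one the Legendrian emphasis suggests — is to show directly that the invariant $\Lambda\mapsto R^2_\Lambda(z)$ is unchanged by the extra Legendrian isotopies available in the tight $S^1\times S^2$, i.e.\ that it descends along $i_*$; since it then defines a HOMFLY-PT invariant of links in $S^1\times S^2$ with value $1$ on $\emptyset$, uniqueness again gives $F=R^2$ on the basis simultaneously.

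The main obstacle I expect is precisely the interface between these two steps: making the handle-slide relation in $S(S^1\times S^2)$ fully explicit in the Turaev basis, verifying that after applying (ii) and (iii) no residual power of $a$ survives, and recognizing the resulting identity as one satisfied by the ruling-polynomial formula of \cite{R2} (or, in the conceptual route, checking the ruling invariant is insensitive to the new Legendrian moves). This is also exactly the information needed for the promised extension of the Thurston--Bennequin estimates to Legendrian links in the tight $S^1\times S^2$, so I would aim to set up Step 1 in a form general enough to serve both purposes at once.
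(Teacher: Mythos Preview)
Your conceptual route has a genuine gap. The $2$-graded ruling polynomial is an invariant of \emph{Legendrian} links and does \emph{not} satisfy the HOMFLY-PT skein relations, so showing that $L\mapsto R^2_L(z)$ is unchanged under the extra Legendrian move in $S^1\times S^2$ (this is the paper's Theorem~\ref{thm:Invariance}) does not produce a HOMFLY-PT invariant of framed links there, and the Gilmer--Zhong uniqueness cannot be invoked. What you actually need is that the skein-module functional $G:\hfs\to R$ defined on Turaev's basis by $G(A_\lambda A_{-\mu})=R^2_{A_\lambda A_{-\mu}}(z)$ annihilates $\ker i_*$; that is a statement about the topological Move~A at the skein level, not about the Legendrian Move~B, and the two are not interchangeable. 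Indeed, by Theorem~\ref{thm:R2} one has $G(L)=R^2_L(z)+p(a^{-1})$ for a general Legendrian $L$, so $G$ is not ``the ruling polynomial extended by linearity over isotopy classes,'' and its behavior under $i_*$ cannot be read off from the Legendrian invariance of $R^2$.

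Your computational route is closer in spirit but misses the structural ingredient that makes it tractable. The paper does not extract handle-slide recursions in the Turaev basis; instead it passes to the Hadji--Morton meridian-eigenvector basis $\{Q_{\lambda,\mu}\}$, where Move~A collapses to the single relation $i_*\circ\varphi=c_{\emptyset,\emptyset}\cdot i_*$. Since the $Q_{\lambda,\mu}$ diagonalize $\varphi$ with pairwise distinct eigenvalues, this immediately forces $F(Q_{\lambda,\mu})=\delta_{\lambda,\emptyset}\delta_{\mu,\emptyset}$ (Theorem~\ref{the:F1}). The remaining, and substantive, work is the bridge back to Turaev's basis: the primary proof does it with a Vandermonde argument on iterated Legendrian meridian maps (Lemma~\ref{thm:RF}), combined with Theorems~\ref{thm:R2} and~\ref{thm:InnerProduct} to kill the $p(a^{-1})$ tail; the alternate proof uses Koike's change-of-basis formula between $\{Q_{\lambda,\mu}\}$ and $\{Q_\lambda Q^*_\mu\}$. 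Neither bridge appears in your outline, and they are where the content of the argument actually lies.
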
 

Such an explicit description allows us to deduce in Corollary \ref{cor:poly} below that for any $L \subset S^1 \times S^2$ the HOMFLY-PT rational function, $f(L)$, is in fact a Laurent polynomial in $\Z[a^{\pm 1}, z^{\pm 1}]$.  

We give two proofs of Theorem \ref{the:HOMFLY}.  Our primary approach makes use of results from Legendrian knot theory which relate Legendrian link invariants such as the Thurston-Bennequin number and ruling polynomials with the HOMFLY-PT skein module.  In turn, Theorem \ref{the:HOMFLY} then allows us to extend a HOMFLY-PT polynomial estimate for the Thurston-Bennequin number to the $S^1\times S^2$ setting, see Corollary \ref{cor:TBEst}.  Note that similar estimates have been shown to hold in certain contact lens spaces in the work of Cornwell \cite{C1} \cite{C2}.  Whether close connections between Legendrian knots and HOMFLY-PT type invariants can be established in more general classes of contact manifolds remains an interesting direction for further investigation.

The remainder of this article is organized as follows.  In Section 2, a number of known results concerning skein modules are recalled for later use.    Section 3 contains background in Legendrian knot theory including a discussion of Legendrian knots in $S^1\times S^2$.  Section 4 contains the proof of Theorem \ref{the:HOMFLY} and its corollaries.  In addition, we provide an example to illustrate the use of Theorem \ref{the:HOMFLY} in computing the $S^1\times S^2$ HOMFLY-PT polynomial.   Finally, we conclude in Section 5 with an alternate proof of Theorem \ref{the:HOMFLY} which is less reliant on Legendrian knot theory.  Instead, change of basis results of Koike \cite{K} concerning symmetric functions are translated to the skein module setting where they are used in combination with a result from \cite{R2} to evaluate $F(A_\lambda A_{-\mu})$.

\subsection{Acknowledgements}  This work was initiated through the PRUV program at Duke University.  We thank David Kraines for supervising the program and encouraging our participation.  Also, we thank Lenny Ng and Yo'av Rieck for their interest in the project, and Hugh Morton for e-mail correspondence related to this work.  
M. Lavrov received support from NSF CAREER grant DMS-0846346.

\section{Background on skein modules}

When working with the skein module $\hfs$ we view the interior of $S^1 \times D^2$ topologically as $S^1 \times \R^2$.  
We use the projection $S^1 \times \R^2\rightarrow S^1 \times \R$, $(x,y,z) \mapsto (x,y)$ to present framed links via link diagrams in an annulus.  The framing is assumed to be the blackboard framing.   Moreover, we use the homeomorphism $S^1 \cong [0,1]/\{0,1\}$ to view our projection annulus as the planar region $[0,1]\times \R$ with the lines $x=0$ and $x=1$ identified.  

With this choice of projection annulus fixed, a product structure arises on $\hfs$ from stacking diagrams.  That is, given annular diagrams $K$ and $L$ we define their product $K \cdot L$ by stacking $K$ vertically above $L$:
\[
K \cdot L = \raisebox{-7.2ex}{\includegraphics[height=16.2ex]{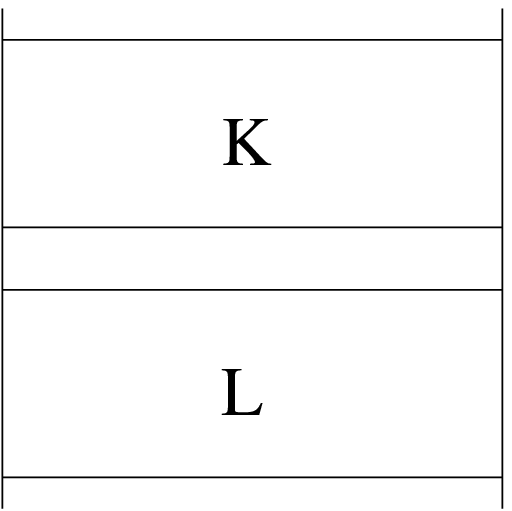}}.
\]  
The resulting product on $\hfs$ is  commutative, and the empty diagram is a multiplicative identity element.

\subsection{Links in $S^1 \times S^2$}  Any link in $S^1 \times S^2$ may be isotoped into $S^1 \times D^2$.  In this way, the set of isotopy classes of framed links,  $\mathcal{L}(S^1 \times S^2)$, may be viewed as $\mathcal{L}(S^1 \times D^2)/\sim$ where $\sim$ denotes the equivalence relation corresponding to isotopy of links in the larger space.   For instance,  link diagrams related by the Move A pictured in Figure \ref{fig:ExtraMove} represent isotopic framed links in $S^1\times S^2$.

To see this, note that $S^1 \times S^2$ can be obtained from $S^1\times D^2$ by attaching a $2$-handle along a meridian curve, $\mathit{pt} \times S^1$, and then attaching a $3$-handle to the remaining boundary.  
Move A corresponds to isotoping the pictured arc of the link across the new $2$-handle.

\begin{figure}
\centerline{ $\raisebox{-7ex}{\includegraphics[scale=.5]{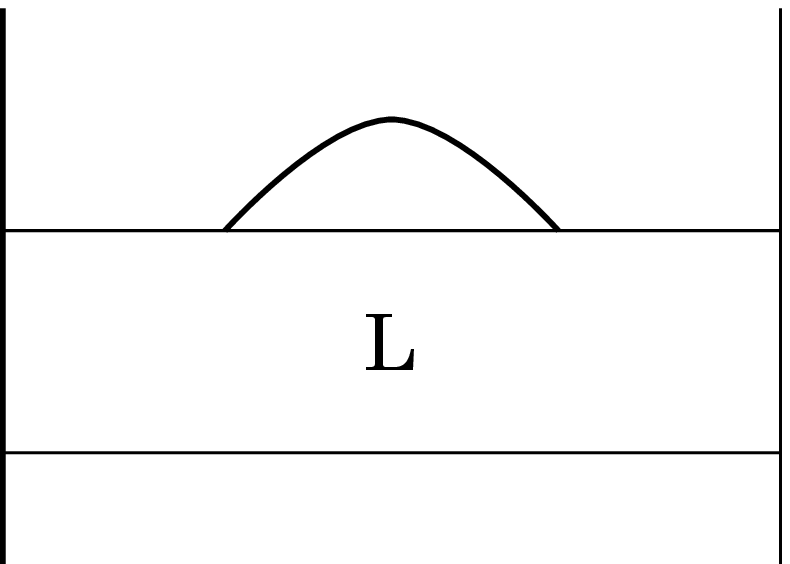}} \quad \leftrightarrow \quad \raisebox{-7ex}{\includegraphics[scale=.5]{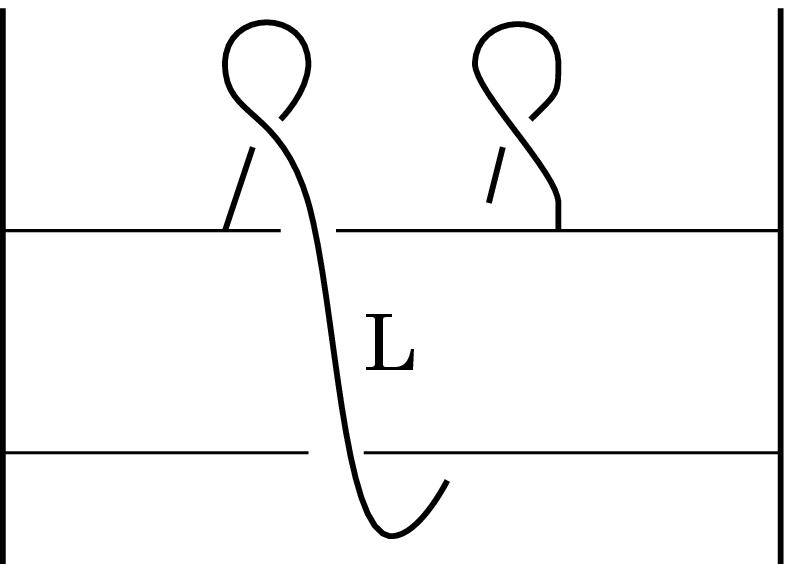}}$}
\caption{Move A}
\label{fig:ExtraMove}
\end{figure}

The two half twists that appear in the diagram on the right in Move A arise as follows.  We can view a framed link as a union of embedded annuli.  Since our diagrams represent links with the blackboard framing, these annuli can be viewed in the projection as thickened versions of the knot diagram.  
During Move A, the portion of the annulus pictured on the left of Figure \ref{fig:ExtraMove} is isotoped across the $2$-handle so that it remains tangent to the core of the $2$-handle during the isotopy.  A collar neighborhood of the boundary of the core of the $2$-handle  is simply a meridian curve, $m$, with blackboard framing where  we isotope $m$ slightly so that in the projection the left (resp. right) half of $m$ runs in front of (resp. behind) $L$.  There are two crossings between the pictured arc of $L$ and $m$.  After the isotopy, they become the self crossings of $L$ on the right of Figure \ref{fig:ExtraMove}.

\begin{remark}  Although we will not need to use it explicitly, it is standard that two link diagrams in $S^1\times \R$ represent isotopic (framed) links in $S^1 \times S^2$ if and only if they can be related by Move A in combination with the usual (framed) Reidemeister moves.  A more general result describing the effect of surgery on isotopy classes of links appears as part of the Kirby calculus.  Specifically, assume $N$ and $M$ are $3$-manifolds with $N$ obtained from attaching a $2$-handle along $\partial M$.  Then, two framed links in $M$ are isotopic in $N$ if and only if they are related by isotopies in $M$ and band connected sums with the attaching curve of the $2$-handle with framing orthogonal to $\partial M$.  
\end{remark}

\subsection{Bases for the skein module $S(S^1\times D^2)$}  \label{sec:TuraevB} For each $m \geq 1$, let $A_m$ denote the knot diagram that winds $m$ times around the annulus with $m-1$ positive crossings.  This diagram may be viewed as the closure of the braid $\sigma_1 \sigma_2 \cdots \sigma_{m-1}$ where strands are numbered from top to bottom.  The diagrams $A_m$ are oriented in the direction of the positive $x$-axis, and we let $A_{-m}$ denote $A_m$ with its orientation reversed.

Recall that a partition $\lambda = (\lambda_1,\ldots, \lambda_\ell)$ is a finite (possibly empty) sequence of positive integers which is non-increasing.  If $\sum_i \lambda_i = n$, we say $\lambda$ is a partition of $n$ and write $\lambda \vdash n$.  Given partitions $\lambda = (\lambda_1,\ldots, \lambda_\ell)$ and $\mu = (\mu_1, \ldots, \mu_k)$, we let $A_\lambda = A_{\lambda_1} \cdots A_{\lambda_\ell}$ and $A_{-\mu} = A_{-\mu_1} \cdots A_{-\mu_k}$, see Figure \ref{fig:BF}.
Note that $A_\emptyset = 1$. 
\begin{figure}
\centerline{ \includegraphics[scale=.5]{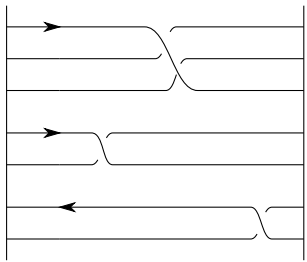}}
\caption{The link $A_\lambda A_{-\mu}$ where $\lambda = (3,2)$ and $\mu = (2)$.}
\label{fig:BF}
\end{figure}

Turaev proved in \cite{Tu} that  
the skein module $\hfs$ is a free $R$-module with linear basis 
\[
\{ A_\lambda A_{-\mu} \, | \, \lambda \vdash n_1, \mu \vdash n_2 ; n_1, n_2 \geq 0
\}.
\]
In fact this statement remains true over $\Z[a^{\pm 1}, z^{\pm 1}]$.  We will refer to this collection as {\it Turaev's geometric basis}.  

We let $\C^+$ (resp. $\C^-$) denote the subalgebras generated by $A_m$ with $m > 0$ (resp. $m < 0$) and $1$.  It follows from the form of Turaev's basis that multiplication gives an isomorphism
\begin{equation} \label{eq:AnIso}
\C^+ \otimes \C^- \cong \hfs.
\end{equation} 
Given an oriented link diagram $L \subset S^1 \times \R$, we let $L^*$ denote the diagram obtained by reversing the orientation of all components of $L$.  This operation induces an algebra automorphism $\psi : \hfs \rightarrow \hfs$ with $\psi(A_m) = A_{-m}$, $m \geq 1$.  In particular, $\psi$ restricts to an isomorphism between $\C^+$ and $\C^-$.

\subsubsection{Meridian eigenvector basis}

The most readily available description of the $S^1\times S^2$ invariant, $F$, (see Theorem \ref{the:F1} below) involves a more complicated basis for $\hfs$ which is denoted $\{Q_{\lambda, \mu}\}$ and referred to as the {\it meridian eigenvector basis}.  This basis was shown to exist in \cite{MH}, and we now recall some details of the explicit construction of the basis elements $Q_{\lambda, \mu}$ from \cite{HM}.

We begin by introducing skein elements $h_n \in S(S^1\times D^2)$ for $n \geq 0$.  When $n=0$, $h_0 = 1$.  For $n >0$, recall that to a permutation $\pi \in S_n$ one can associate a positive  permutation braid $\omega_\pi$.  Such a braid is characterized up to isotopy, by the requirement that strands $i$ and $j$ with $i < j$ have a single positive crossing (the upper strand crosses over the lower strand) if and only if $\pi(i) > \pi(j)$, and otherwise do not meet.  See Figure \ref{fig:Permutation}.    Now, $h_n$ is defined as the linear combination
\[
\frac{1}{\alpha_n} \sum_{\pi \in S_n} s^{l(\pi)} \omega_\pi
\]
where $l(\pi)$ is the number of crossings of $\omega_\pi$ and 
\[
\alpha_n = s^{n(n-1)/2} [n] \, [n-1] \cdots [1] 
\]
with $[i] = \frac{s^i-s^{-i}}{s-s^{-1}}$.  We use  the notation $h_n^* = \psi(h_n)$  for the orientation reverse of $h_n$.

\begin{figure}
\centerline{ \includegraphics[scale=.5]{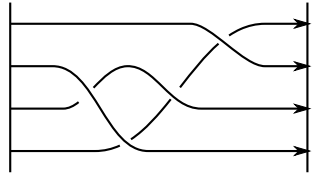}}
\caption{The positive permutation braid $\omega_\pi$ where $\pi$ is the $3$-cycle $(1\, 2\, 4)$.}
\label{fig:Permutation}
\end{figure}

  Supposing $\lambda = (\lambda_1, \ldots, \lambda_\ell)$ and $\mu = (\mu_1, \ldots, \mu_{\ell'})$, Hadji and Morton define
$Q_{\lambda, \mu}$ as the determinant of the $(\ell + \ell') \times (\ell + \ell')$ matrix with diagonal entries $h^*_{\mu_{\ell'}}, \ldots, h^*_{\mu_1}, h_{\lambda_1}, \ldots, h_{\lambda_\ell}$.  The entries in the first $\ell'$ rows consist of elements $h^*_n$ with subscript decreasing from left to right where if $n <0$ we take $h_n=0$.  The entries in the remaining $\ell$ rows are of the form $h_n$ with the subscript now increasing from left to right.

For example, if $\lambda = (4,1)$ and $\mu = (3,2)$, then
\[
Q_{\lambda,\mu} = \det \left[\begin{array} {cccc} h_2^* & h_1^* & 1 & 0 \\
																									h_4^* & h_3^* & h_2^* & h_1^* \\
																									h_2 & h_3 & h_4 & h_5 \\
																									0 & 0 & 1 & h_1
																									\end{array}\right]
\]
  
\begin{remark} (i)  By convention, $Q_{\emptyset, \emptyset}$ is the unit element $1$.

\noindent (ii)
As noted in \cite{HM}, the orientation reverse map $\psi$ satisfies 
\begin{equation} \label{eq:reverse}
\psi(Q_{\lambda, \mu}) = Q_{\mu, \lambda}.
\end{equation}
\end{remark}

We now consider a linear map $\varphi: \hfs \rightarrow \hfs$  arising from inserting an additional loop around link diagrams:
\[
\varphi(X) = \raisebox{-5.4ex}{\includegraphics[height=14.4ex]{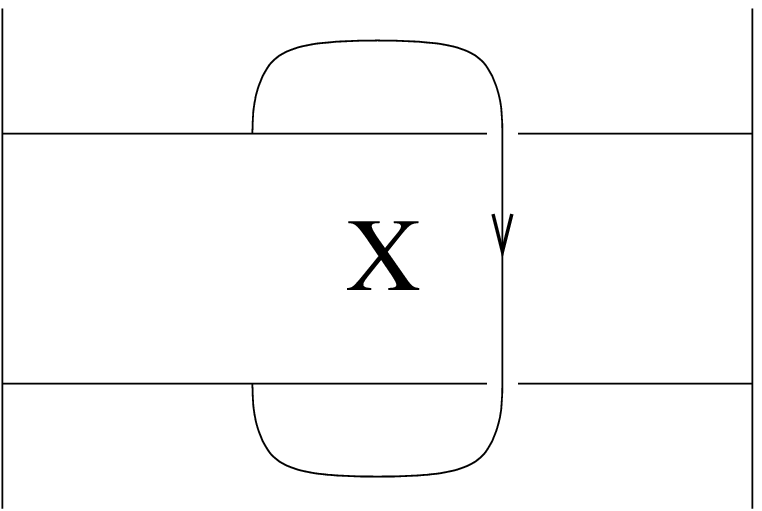}} \,\, .
\]
Since the new component is required to be isotopic to the meridian of $S^1\times D^2$, $\varphi$ is referred to as the {\it meridian map}. 
There is a variant on the meridian map, $\bar{\varphi}: \hfs \rightarrow \hfs$, where the orientation of the new component is reversed.   

As the terminology suggests, the $Q_{\lambda, \mu}$ are eigenvectors of the meridian maps.  The eigenvalues may be described in terms of the Young diagrams of $\lambda$ and $\mu$.  If $\lambda = (\lambda_1, \ldots, \lambda_\ell)$, then the Young diagram of $\lambda$ which we denote as, $Y(\lambda)$, is an arrangement of boxes in $\ell$ rows with $\lambda_i$ boxes in the $i$-th row.  Each row of boxes is left justified.  For example, the partition $\lambda = (5, 2)$ has Young diagram $Y(\lambda) = \raisebox{-1ex}{\includegraphics[scale=.5]{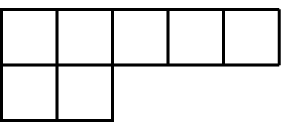}}$.

A box $x \in Y(\lambda)$ has {\it content} $\mathit{ct}(x) = j-i$ if $x$ sits in the $i$-th row of and $j$-th column of $\lambda$.  (As with matrices, rows are enumerated from top to bottom and columns are enumerated from left to right.)  We define the content polynomial of a partition $\lambda$ by
\[
C_\lambda(s) = \sum_{x \in Y(\lambda)} s^{\mathit{ct}(x)}.
\]
Continuing the example above, the contents of the boxes of $Y(\lambda)$ for the partition $\lambda = (5,2)$ are given by
\settoheight{\unitlength}{\boxed{1}}
\begin{center}
\raisebox{0ex}{
\begin{picture}(5,2)
\put(0,0){\line(0,1){2}}
\put(1,0){\line(0,1){2}}
\put(2,0){\line(0,1){2}}
\put(3,1){\line(0,1){1}}
\put(4,1){\line(0,1){1}}
\put(5,1){\line(0,1){1}}
\put(0,0){\line(1,0){2}}
\put(0,1){\line(1,0){5}}
\put(0,2){\line(1,0){5}}

\put(0,0){\makebox(1,1){-1}}
\put(0,1){\makebox(1,1){0}}
\put(1,0){\makebox(1,1){0}}
\put(1,1){\makebox(1,1){1}}
\put(2,1){\makebox(1,1){2}}
\put(3,1){\makebox(1,1){3}}
\put(4,1){\makebox(1,1){4}}
\end{picture}}.
\end{center}
The content polynomial of $\lambda$ is therefore $C_\lambda(s) = s^4 + s^3 + s^2 + s + 2 + s^{-1}$.  Note that the content polynomial $C_\lambda$ uniquely determines the partition $\lambda$ since $Y(\lambda)$ may be reconstructed from the coefficients.

\begin{theorem}[\cite{HM}] \label{lem:HFEig} The skein elements $Q_{\lambda, \mu}$ form an $R$-basis for $\hfs$ and are eigenvectors for the meridian maps.  The corresponding eigenvalues are given by
\[
c_{\lambda, \mu} = z \,(a\, C_\lambda(s^2)-a^{-1}\, C_\mu(s^{-2})) + \frac{a-a^{-1}}{z}
\]
and satisfy
\[
\varphi(Q_{\lambda, \mu}) = c_{\lambda, \mu} Q_{\lambda, \mu} \quad \mbox{and} \quad \bar{\varphi}(Q_{\lambda, \mu}) = c_{\mu, \lambda} Q_{\lambda, \mu}.
\]
\end{theorem}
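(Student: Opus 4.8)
The plan is to realize each $Q_{\lambda,\mu}$ as the annular closure of a primitive idempotent in a Hecke-type algebra, derive the meridian-eigenvector property from Schur's lemma, and pin down the eigenvalue by a local skein computation; the basis statement will come from a triangularity argument for the defining determinant. First I would recall the classical fact (Turaev \cite{Tu}; cf.\ \cite{HM}) that $\C^+$ is the polynomial ring $R[h_1,h_2,\dots]$, so that $h_n\mapsto$ (complete homogeneous symmetric function) identifies $\C^+$ with the ring $\Lambda_R$ of symmetric functions. Under this identification the Jacobi--Trudi formula $s_\lambda=\det(h_{\lambda_i-i+j})$ says both that $Q_{\lambda,\emptyset}$ corresponds to the Schur function $s_\lambda$ and that $Q_{\lambda,\emptyset}$ equals the annular closure $\widehat{e_\lambda}$ of the Gyoja--Aiston idempotent $e_\lambda\in H_{|\lambda|}$. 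Thus $\{Q_{\lambda,\emptyset}\}$ is the Schur basis of $\C^+$, and applying the orientation-reversal $\psi$ gives the basis $\{Q_{\emptyset,\mu}=\psi(Q_{\mu,\emptyset})\}$ of $\C^-$.

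For the basis claim I would show that $\{Q_{\lambda,\mu}\}$ is unitriangularly related to the basis $\{Q_{\emptyset,\mu}\cdot Q_{\lambda,\emptyset}\}=\{s_\mu^{*}s_\lambda\}$ of $\hfs$ coming from \eqref{eq:AnIso}. Expanding the defining determinant of $Q_{\lambda,\mu}$, every term is a product of the $\ell'$ entries chosen from the $h^{*}$-rows with the $\ell$ entries chosen from the $h$-rows; the terms in which the $h^{*}$-rows occupy exactly the first $\ell'$ columns reassemble, by Jacobi--Trudi applied to each diagonal block, to $s_\mu^{*}s_\lambda$, while a count of column shifts shows that in every other term the total $h^{*}$-subscript is $<|\mu|$ and the total $h$-subscript is $<|\lambda|$. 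Hence $Q_{\lambda,\mu}=s_\mu^{*}s_\lambda+(\text{terms of strictly smaller bidegree})$, the transition matrix is block-unitriangular for the total-degree order, and $\{Q_{\lambda,\mu}\}$ is an $R$-basis of $\hfs$.

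For the eigenvector property I would pass to the mixed Hecke algebra $H_{m,n}$ of oriented tangles on $m=|\mu|$ down-strands and $n=|\lambda|$ up-strands, which is semisimple over $\Q(a,s)$. Following \cite{HM}, there is a primitive idempotent $e_{[\lambda,\mu]}\in H_{m,n}$ whose annular closure is $Q_{\lambda,\mu}$ (reducing to $e_\lambda$ when $\mu=\emptyset$). Inserting a $0$-framed meridian around all the strands defines a linear endomorphism of $H_{m,n}$, and since $e_{[\lambda,\mu]}$ is idempotent its image is $e_{[\lambda,\mu]}\,w\,e_{[\lambda,\mu]}$, where $w\in H_{m,n}$ is the meridian loop; this lies in $e_{[\lambda,\mu]}H_{m,n}e_{[\lambda,\mu]}=\Q(a,s)\cdot e_{[\lambda,\mu]}$ because a primitive idempotent in a semisimple algebra cuts out a one-dimensional corner. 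Closing up, $\varphi(Q_{\lambda,\mu})=c_{\lambda,\mu}\,Q_{\lambda,\mu}$ for a scalar, which must lie in $R$ since $Q_{\lambda,\mu}$ is part of an $R$-basis. To evaluate $c_{\lambda,\mu}$ I would resolve the meridian--strand crossings with relation (i): the crossing-change terms give back a multiple of $Q_{\lambda,\mu}$, while the oriented-smoothing terms absorb the meridian into the cable and, after pushing it through the idempotent, contribute a term $z\,a\,s^{2\,\mathit{ct}(x)}$ for each box $x\in Y(\lambda)$ and $-z\,a^{-1}s^{-2\,\mathit{ct}(x)}$ for each box $x\in Y(\mu)$, the residual empty meridian contributing $\tfrac{a-a^{-1}}{z}$ via relation (iii); summing yields $c_{\lambda,\mu}=z\bigl(aC_\lambda(s^2)-a^{-1}C_\mu(s^{-2})\bigr)+\tfrac{a-a^{-1}}{z}$. (Equivalently, specialize $a=s^{N}$ and read off the quantum Casimir eigenvalue of the composite $\mathfrak{gl}_N$-module $V_{[\lambda,\mu]}$.) Finally, applying $\psi$ to this identity and using $\psi\circ\varphi=\bar\varphi\circ\psi$ together with \eqref{eq:reverse} gives $\bar\varphi(Q_{\lambda,\mu})=c_{\mu,\lambda}\,Q_{\lambda,\mu}$.

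The hard part will be the input from \cite{HM} used in the third step: constructing the primitive idempotent $e_{[\lambda,\mu]}$ in $H_{m,n}$ and matching its closure with the mixed determinant. In contrast to $\C^+$, the algebra $H_{m,n}$ fails to be semisimple for special parameter values, and ``straightening'' the determinant into the closure of an idempotent requires the composite-Schur-function identities (``composite Jacobi--Trudi'', in the spirit of Koike \cite{K} as invoked in Section 5). One must also verify that the sandwiched meridian loop genuinely lands in the one-dimensional corner $e_{[\lambda,\mu]}H_{m,n}e_{[\lambda,\mu]}$ and that the scalar, a priori only in $\Q(a,s)$, lies in $R$. The remaining ingredients are bookkeeping (the triangularity count), quotation of standard annular-skein facts ($\C^+\cong\Lambda_R$ and the closure of the Aiston idempotent), and one finite skein evaluation.
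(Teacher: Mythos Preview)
There is nothing to compare against: the paper does not prove this theorem. It is stated as a quotation from \cite{HM} (Hadji--Morton) and used as a black box; no argument is supplied in the text.

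Your sketch is a reasonable outline of how such a result is proved, and in spirit it tracks the literature: triangularity of the determinant against the product basis $\{Q_\lambda Q_\mu^*\}$ for the basis claim, and an idempotent/Schur's-lemma argument for the eigenvector property. A few cautions. First, the assertion that $Q_{\lambda,\mu}$ is the annular closure of a \emph{primitive} idempotent in a mixed tangle algebra $H_{m,n}$ is not something you can simply ``follow \cite{HM}'' for; in \cite{HM} the $Q_{\lambda,\mu}$ are defined by the determinant and the eigenvector property is established by a direct skein/recursion argument on how the meridian interacts with the $h_n$ and $h_n^*$, rather than by first producing a primitive idempotent and invoking semisimplicity. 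So your third paragraph is closer to an alternative strategy than a summary of \cite{HM}, and the step you yourself flag as ``the hard part'' (matching the determinant with the closure of a primitive idempotent) is genuinely the whole content. Second, your eigenvalue computation is only a heuristic: the claim that each smoothed term contributes $z\,a\,s^{2\,\mathrm{ct}(x)}$ per box of $Y(\lambda)$ is exactly the Aiston--Morton framing-eigenvalue computation for $e_\lambda$, and extending it cleanly to the mixed case needs the idempotent already in hand. Third, the triangularity paragraph is fine in outline but the bookkeeping (``a count of column shifts'') needs to be done carefully to see that off-block terms land in strictly smaller bidegree on \emph{both} sides simultaneously.

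In short: the paper offers no proof here, and your proposal is a credible plan whose nontrivial content is precisely the input you attribute to \cite{HM}.
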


If either $\lambda$ or $\mu$ is empty, we introduce the shortened notation 
\[
Q_\lambda =Q_{\lambda, \emptyset},  \quad  Q^*_\mu = \psi(Q_\mu) = Q_{\emptyset, \mu}.
\] 
The skein elements $\{Q_{\lambda}\}$ form a basis for the subalgebra $C^+$ and were originally defined in \cite{AM} in a different manner.  The equivalence of this alternate definition with the determinant used to define the $Q_\lambda$ in the present article was established in \cite{Lu}.  Moreover, it is shown in \cite{AM} and \cite{Lu} that the $Q_\lambda$ satisfy the same multiplication rule as the Schur symmetric functions.  (From the determinant perspective this is immediate since the formula for $Q_\lambda$ is identical to the well-known Jacobi-Trudy identity from the theory of symmetric functions; see, for instance \cite{St}.)  In particular, the structure constants are positive integers.

Combining these results with the algebra isomorphism (\ref{eq:AnIso}) gives the following:

\begin{theorem}[\cite{AM, Lu}] \label{thm:ProdBasis} The collection $\{Q_\lambda Q^*_\mu\}$ forms a basis for $\hfs$.  Moreover, the product of any two elements of this basis has positive integer coefficients when written as a linear combination of the $Q_\lambda Q^*_{\mu}$.  
\end{theorem}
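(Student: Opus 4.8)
The plan is to reduce the statement to the facts already recorded for the subalgebra $\C^+$, together with the tensor decomposition (\ref{eq:AnIso}).

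First I would check that $\{Q^*_\mu\}$ is an $R$-basis for $\C^-$. The orientation-reversal operation induces an $R$-algebra automorphism $\psi$ of $\hfs$ that restricts to an isomorphism $\C^+\cong\C^-$, and $Q^*_\mu$ is by definition $\psi(Q_\mu)$; since $\{Q_\lambda\}$ is an $R$-basis for $\C^+$ by \cite{AM,Lu}, its image $\{Q^*_\mu\}$ is an $R$-basis for $\C^-$. Next, because the stacking product on $\hfs$ is commutative, the multiplication map (\ref{eq:AnIso}) is not merely an $R$-module isomorphism but an isomorphism of $R$-algebras, where $\C^+\otimes_R\C^-$ is given its standard tensor-product algebra structure; this is just the computation $(x_1\otimes y_1)(x_2\otimes y_2)\mapsto x_1x_2y_1y_2=(x_1y_1)(x_2y_2)$. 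Since $\C^+$ and $\C^-$ are free over $R$ with bases $\{Q_\lambda\}$ and $\{Q^*_\mu\}$, the set $\{Q_\lambda\otimes Q^*_\mu\}$ is an $R$-basis of $\C^+\otimes_R\C^-$, and transporting it across the isomorphism shows that $\{Q_\lambda Q^*_\mu\}$ is an $R$-basis of $\hfs$. This establishes the first assertion.

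For the statement about structure constants I would work in the tensor picture. Write the Schur-type multiplication rule for the $Q_\lambda$ as $Q_\lambda Q_\nu=\sum_\alpha c^\alpha_{\lambda\nu}Q_\alpha$, where the $c^\alpha_{\lambda\nu}$ are Littlewood--Richardson coefficients and in particular nonnegative integers, by \cite{AM,Lu}. Applying the algebra homomorphism $\psi$ gives $Q^*_\mu Q^*_\rho=\psi(Q_\mu Q_\rho)=\sum_\beta c^\beta_{\mu\rho}Q^*_\beta$ with the same coefficients. Using commutativity of the stacking product to rearrange factors,
\[
(Q_\lambda Q^*_\mu)(Q_\nu Q^*_\rho)=(Q_\lambda Q_\nu)(Q^*_\mu Q^*_\rho)=\sum_{\alpha,\beta}c^\alpha_{\lambda\nu}\,c^\beta_{\mu\rho}\,Q_\alpha Q^*_\beta,
\]
and each coefficient $c^\alpha_{\lambda\nu}\,c^\beta_{\mu\rho}$, being a product of nonnegative integers, is again a nonnegative integer.

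The step that most requires care --- really a bookkeeping subtlety rather than a genuine obstacle --- is verifying that (\ref{eq:AnIso}) is multiplicative, so that products of elements of $\C^+$ with elements of $\C^-$ never produce terms outside the $R$-span of the $Q_\lambda Q^*_\mu$; this is exactly where commutativity of the stacking product is used. Everything else is a formal consequence of the Schur/Littlewood--Richardson rule for the $Q_\lambda$ and of the symmetry supplied by $\psi$.
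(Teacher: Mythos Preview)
Your argument is correct and is precisely the approach the paper has in mind: the paper does not give a separate proof, but simply notes that the $Q_\lambda$ form a basis of $\C^+$ multiplying by the Littlewood--Richardson rule, and then states that ``Combining these results with the algebra isomorphism (\ref{eq:AnIso}) gives the following.'' You have spelled out exactly this combination, including the role of $\psi$ in transporting the basis and the structure constants to $\C^-$, and the use of commutativity to see that (\ref{eq:AnIso}) is multiplicative.
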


\section{Legendrian links and ruling polynomials}

Recall that a {\it contact structure} on a $3$-manifold $M$ is a two plane distribution $\xi \subset TM$ which is maximally non-integrable.  A smooth link $L$ in a contact $3$-manifold $(M, \xi)$ is {\it Legendrian} if $L$ is every tangent to $\xi$.  

We consider Legendrian links in $S^1 \times \R^2$ with respect to the standard contact structure given as the kernel of the $1$-form $dz - y \, dx$.  (The $x$-coordinate is circle-valued.)   Typically, a Legendrian link $L \subset S^1\times \R^2$ is presented via its projection to the $xz$-annulus, $\pi_{xz}(L)$, which we refer to as the {\it front diagram} for $L$.  Generically, a front diagram consists of a union of closed curves in the $xz$-annulus satisfying the following conditions:
\begin{itemize}
\item The diagram $D$ does not have any vertical tangencies.
\item All crossings of $D$ are transverse double points.
\item The components of $D$ are immersed except for semi-cubical cusp singularities. 
\end{itemize}
Moreover, any such diagram lifts to a Legendrian link in $S^1 \times \R^2$ by taking the $y$-coordinate to be the slope, $\displaystyle \frac{dz}{dx}$.  At crossings of a front diagram, it is typical not to indicate which strand appears on top since the one with lesser slope is necessarily the overstrand.  (The $y$-axis is oriented into the page.) An example of a front diagram appears in Figure \ref{fig:RulEx}.

A {\it Legendrian isotopy} is a smooth isotopy of a Legendrian link through other Legendrian links.   Two front diagrams represent Legendrian isotopic links if and only if they are related by a sequence of the so-called Legendrian Reidemeister moves, see for example \cite{R2}.  (One can also pass crossings and cusps across the line $x=0$ which is identified with $x=1$.)  

A Legendrian link $L \subset \st$ has a natural framing, the contact framing, which agrees with the blackboard framing of the projection of $L$ to the $xy$-annulus, $\pi_{xy}(L)$.  Legendrian isotopic links are isotopic as framed links, and we will always use the contact framing when viewing $L$  as an element of a skein module.  The contact framing does not agree with the blackboard framing of the front diagram, $\pi_{xz}(L)$.  Rather it corresponds to the blackboard framing of a diagram obtained from $\pi_{xz}(L)$ by adding a loop with a negative crossing at each right cusp.   (This can be seen, for instance, by using the resolution procedure from \cite{NgTr} to obtain the projection of a Legendrian isotopic link to the $xy$-annulus.)  Thus if we smooth the cusps of $\pi_{xz}(L)$ to view the front diagram as an ordinary link diagram, then we have 
\begin{equation}
[L] = a^{-c(L)} [\pi_{xz}(L)]
\end{equation}
in $\hfs$ where $c(L)$ denotes $\frac{1}{2}$ the number of cusps of $L$ or, equivalently, the number of right cusps of $L$.   

\subsection{Normal rulings}  We now recall a certain combinatorial structure on a front diagram of a Legendrian link $L$ called a normal ruling.  Normal rulings were introduced independently in \cite{ChP} and \cite{F}.   

\begin{figure}
\centerline{ \includegraphics[scale=.6]{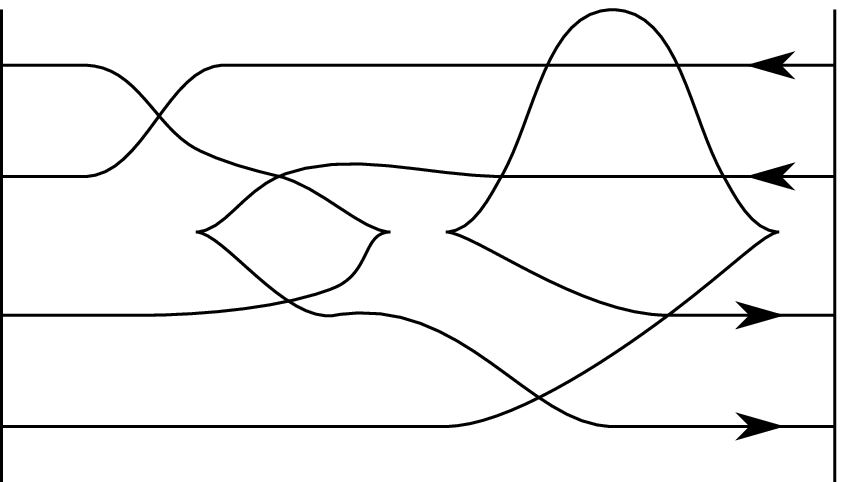} \quad \includegraphics[scale=.6]{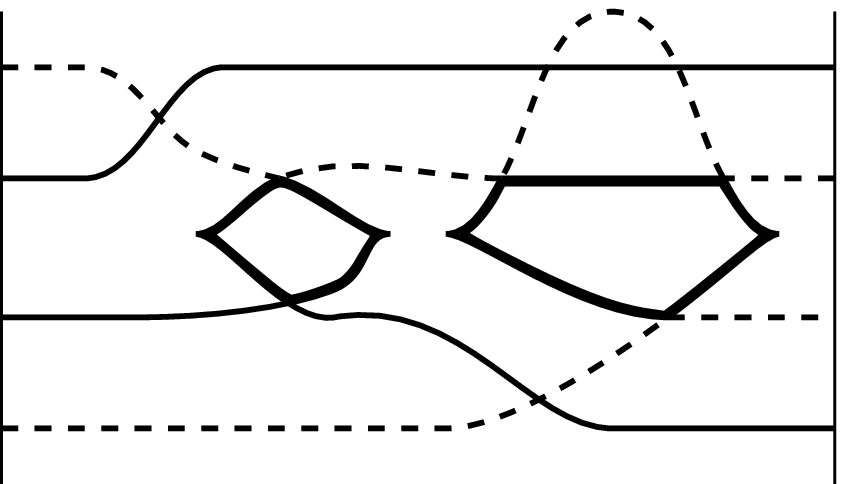} }
\caption{A front diagram for a Legendrian link $L$, and a $2$-graded normal ruling of $L$.}
\label{fig:RulEx}
\end{figure}

Let $L \subset \st$ be a Legendrian link such that the crossings and cusps of the front diagram of $L$ all have distinct $x$-coordinates.  
A {\it normal ruling}, $\rho$, of $L$ is a certain type of division of the points of its front diagram into pairs, and an example appears in Figure \ref{fig:RulEx}.   In detail, at all values of $x$ not containing crossings or cusps, the points of the front diagram are divided into pairs by a fixed point free  involution $\rho_x$ that depends continuously on $x$.   Strands that meet at a cusp should be paired near the cusp point, and the pairing of strands not meeting at the cusp should be the same before and after the cusp.  

Strands that meet at a crossing are not allowed to be paired near the crossing.  In addition, the involutions are required to satisfy the following continuity property at crossings.  For values of $x$ in an open interval containing the $x$-value of a crossing point, we can find continuous paths with monotonically increasing $x$-coordinates that cover the corresponding portion of the front diagram in a one-to-one manner except at the crossing point which is contained in exactly two paths.  The involutions $\rho_x$, should then divide these paths into pairs.  At the crossing, either the two paths that meet cross one another, or they both turn a corner.  In the latter case we refer to the crossing as a {\it switch}.  

Finally, there is a restriction near switches known as the {\it normality condition}.  At a switch, the two paths that meet are not allowed to be paired with each other, and hence they are each paired with a {\it companion strand} above or below in the front diagram.  If we consider intervals on the $z$-axis arising from connecting the switch strands to their companion strands, then near the switch either the two intervals should be disjoint or one should be contained in the other.  That is, out of the six possibilities for the vertical ordering of the switching strands and their companion strands near the switch, only the three pictured in Figure \ref{fig:NormC} are allowed.    

\begin{figure}
\centerline{ \includegraphics[scale=.8]{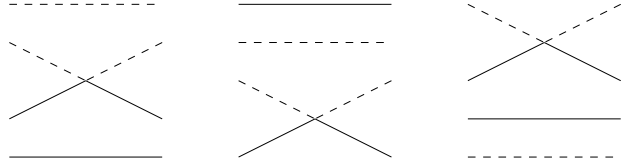}}
\caption{The normality condition.}
\label{fig:NormC}
\end{figure}

If, in addition, the involutions $\rho_x$ always reverse the orientation of $L$, then we say $\rho$ is {\it $2$-graded}.  

The {\it $2$-graded ruling polynomial} of $L$ is defined by
\[
R^2_L(z) = \sum_\rho z^{j(\rho)}
\]
where the sum is over all $2$-graded normal rulings and $j(\rho) = \#\mbox{switches} - \#\mbox{right cusps}$.  The $2$-graded ruling polynomial is a Legendrian isotopy invariant, see \cite{ChP}.

\begin{remark}  In general, $p$-graded ruling polynomials may be defined where $p$ is any divisor of twice the rotation number of $L$.   For this, and for a more formal presentation of the definition of normal ruling, see any of  \cite{ChP}, \cite{R2}, or \cite{LR}.
\end{remark}

\subsection{Normal rulings and the HOMFLY-PT skein module} \label{sec:NR}

Next, we review results relating Legendrian links with the HOMFLY-PT skein module.  Given a Legendrian link $L \subset \st$, the expansion of $[L]$ in $\hfs$ with respect to Turaev's geometric basis may be viewed as a polynomial in $a^{\pm 1}, z^{\pm 1}$ and the infinite collection of variables $A_m$, $m \in \Z \setminus \{0\}$.  We refer to this polynomial as the {\it framed HOMFLY-PT polynomial} of $L$ and denote it by, $H_L \in \Z[a^{\pm 1}, z^{\pm 1}, A_{\pm1}, A_{\pm2}, \ldots]$.  The (unframed) {\it HOMFLY-PT polynomial}, $P_L$, is then defined as $P_L = a^{-w(L)} H_L$  where $w(L)$ denotes the writhe of $L$ which is a signed sum of crossings of a diagram for $L$ (with blackboard framing) in the $xy$-annulus;  $P_L$ is independent of the framing of $L$.

There are two integer invariants of Legendrian links in $J^1(S^1)$, the {\it Thurston-Bennequin number} and the {\it rotation number}, which we denote as $\mathit{tb}(L)$ and $r(L)$ respectively.  In this context, we define $\mathit{tb}(L)$ as  the writhe of a front diagram of $L$ minus the number of right cusps, or equivalently as the writhe of the projection to the $xy$-annulus.  The rotation number of a $1$-component link is defined as $r(L) = \frac{1}{2}[ d(L) - u(L)]$ where $d(L)$ (resp. $u(L)$) denotes the number of downward (resp. upward) oriented cusps.  For a multi-component link we take $r(L)$ to be the {\it sum} of the rotation numbers of the components of $L$.  

The following inequality is proved in \cite{CG} (see also Theorem 6.1 from \cite{R2} for a proof matching our conventions).

\begin{theorem}[\cite{CG}]  \label{thm:CG} For any Legendrian link $L \subset \st$, $\mathit{tb}(L) + |r(L)| \leq - \deg_a P_L$ where $\deg_a P_L$ denotes the degree of $P_L$ if viewed as a Laurent polynomial in $a$. 
\end{theorem}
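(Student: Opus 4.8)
The plan is to reduce the inequality to a diagrammatic bound on the front diagram and then prove that bound by induction on crossings. First I would combine the three facts already recorded above: the identity $[L] = a^{-c(L)}[\pi_{xz}(L)]$ in $\hfs$, the definition $P_L = a^{-w(L)}H_L$, and the equality $w(L) = \mathit{tb}(L)$. Together they give $P_L = a^{-\mathit{tb}(L)-c(L)}\,[D']$, where $D' = \pi_{xz}(L)$ with its cusps smoothed (viewed as an ordinary diagram with the blackboard framing), so that $\deg_a P_L = \deg_a[D'] - \mathit{tb}(L) - c(L)$ and the theorem becomes the assertion
\[
\deg_a[D'] \le c(L) - |r(L)|,
\]
where $\deg_a$ on the left denotes the top power of $a$ appearing when $[D']$ is expanded in Turaev's geometric basis, each $A_m$ being assigned $a$-degree $0$. (Here I use that smoothing cusps neither creates crossings nor alters the cusp set, so the crossings of $D'$ are exactly those of the front.)

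I would prove this by induction on the number of crossings, carrying the front along so that $c = \#\{\text{right cusps}\}$ and $r = \tfrac12(\#\text{down cusps} - \#\text{up cusps})$ remain defined; the key observation is that switching a crossing, or passing to its oriented resolution, affects neither. In the base case the front is crossingless, so $D'$ is a disjoint union of embedded front circles; applying relation (iii) to delete the $d$ nullhomotopic round circles leaves a product of basis elements $A_{\pm1}$ of $a$-degree $0$, whence $\deg_a[D'] = d$, and $d \le c(L) - |r(L)|$ follows from $\sum_\gamma c(\gamma) = c(L)$, the triangle inequality $\sum_\gamma|r(\gamma)| \ge |r(L)|$, the automatic bound $|r(\gamma)| \le c(\gamma)$ for essential components, and the classical Bennequin estimate $c(\gamma) \ge 1 + |r(\gamma)|$ for the (unknotted, writhe-zero) nullhomotopic components. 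For the inductive step I would apply relation (i) at a crossing: the oriented-resolution term has one fewer crossing and the same $c$ and $r$, hence is controlled by induction (the extra factor $z$ not affecting $\deg_a$), while the switched term, having the same number of crossings, is handled by iterating the switches --- with a secondary induction on the number of crossings disagreeing with a fixed descending order on the components --- until one reaches a descending front. Its smoothed projection is a split union of descending unknot diagrams together with descending diagrams of parallel copies of the core circle, which I would evaluate directly, using (ii) to absorb Reidemeister-I kinks into powers of $a$ and (iii) to remove the resulting round circles.

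The step I expect to be the main obstacle is verifying that the bound $\deg_a[D'] \le c(L) - |r(L)|$ actually survives the reduction to descending fronts. For a descending front circle $\gamma$ the computation yields $\deg_a$ equal to the front-writhe of $\gamma$ plus $1$ if $\gamma$ is nullhomotopic and plus $0$ if it is essential; since the front-writhe equals $\mathit{tb}(\gamma) + c(\gamma)$, the per-component requirement is precisely $\mathit{tb}(\gamma) + |r(\gamma)| \le -1$ (nullhomotopic) or $\mathit{tb}(\gamma) + |r(\gamma)| \le 0$ (essential). The first is again the classical unknot Bennequin inequality, but the second is the analogous bound for the core of the solid torus, which is essentially a special case of the theorem itself; one must therefore either supply a self-contained combinatorial argument for it on descending winding-once diagrams, or organize the induction (for instance, ordering components so the essential ones are treated first) so that this case does not recur. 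With these in hand the general bound follows from the triangle inequality exactly as in the base case. One may of course sidestep this bookkeeping and simply invoke the inequality: it is proved in \cite{CG}, and a proof matching the present conventions --- obtained as a byproduct of the identification of the top-$a$-degree part of the framed HOMFLY-PT polynomial with the $2$-graded ruling polynomial --- is given as Theorem 6.1 of \cite{R2}.
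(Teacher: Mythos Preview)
The paper does not actually prove this theorem: it is quoted from \cite{CG}, with a pointer to Theorem~6.1 of \cite{R2} for a proof in the present conventions, and no argument is given in the text. So there is no ``paper's own proof'' to compare against; your last paragraph already records exactly what the paper does.

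Your sketch is the standard Morton--Franks--Williams style argument adapted to the annulus, and the reduction to $\deg_a[D'] \le c(L) - |r(L)|$ is set up correctly. The base case is fine. The genuine soft spot is the one you flag yourself: in the descending endpoint of the secondary induction, an essential component $\gamma$ winding $m$ times around the core need not be a basis link $A_{\pm m}$ on the nose --- it may carry extra front-writhe from zig-zags and from the descending crossings --- and controlling its $a$-degree amounts to the inequality $\mathit{tb}(\gamma)+|r(\gamma)|\le 0$ for Legendrian representatives of the core. That is not circular, but it does require its own argument (e.g.\ a direct count on descending annular fronts, or an appeal to the tight Bennequin bound for the solid torus), and as written your outline defers rather than supplies it. If you want a self-contained proof, the cleanest route is the one in \cite{R2}: show that the top $a$-degree part of $H_L$ is governed by $2$-graded rulings, from which nonnegativity of $c(L)-|r(L)|-\deg_a[D']$ drops out without having to chase individual descending components.
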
    
As a consequence, the unframed HOMFLY-PT polynomial,  $H_L$, satisfies $\deg_aH_L \leq 0$.

Now, consider an $R$-module homomorphism $G: S(S^1\times D^2) \rightarrow R$ defined by the requirement that on Turaev's basis
\[
G(A_\lambda A_{-\mu}) = R^2_{A_\lambda A_{-\mu}}(z).
\]
(We follow the convention that the empty diagram has a unique normal ruling so that $G(1) = 1$.)  Theorem \ref{the:HOMFLY}, which remains to be proven, claims that $G$ is equal to the $S^1\times S^2$ HOMFLY-PT invariant which we have earlier denoted as $F$.

The following properties of $G$ are established\footnote{The statement here is a slight reformulation of \cite{R2}, Theorem 6.3 which is stated in terms of an unframed version of $G$.} in \cite{R2}.  

\begin{theorem}[\cite{R2}] \label{thm:R2} For any Legendrian $L \subset \st$, 
\[
G(L) = R^2_L(z) + p(a^{-1})
\]
where $p(a^{-1})$ denotes a Laurent polynomial in $\Z[a^{\pm 1}, z^{\pm 1}]$ which only contains negative powers of $a$.
\end{theorem}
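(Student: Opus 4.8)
The plan is to reduce the statement to a claim about front diagrams and then establish it by induction on the number of crossings, arranging the HOMFLY-PT skein relations so that they reproduce the combinatorial recursion satisfied by the $2$-graded ruling polynomial while every discrepancy is collected into a Laurent polynomial in $a^{-1}$ that contains only negative powers of $a$. Since $G$ is $R$-linear with $G(1)=1$ and $[L]=a^{-c(L)}[\pi_{xz}(L)]$ in $\hfs$, it is enough to control $G$ on the skein class of the smoothed front $D=\pi_{xz}(L)$. Note also that because $R^2_{A_\lambda A_{-\mu}}\in\Z[z^{\pm1}]$ and Turaev's collection is a basis already over $\Z[a^{\pm1},z^{\pm1}]$, the value $G(L)$ is a Laurent polynomial in $a$ and $z$; the assertion is thus equivalent to saying that the $a^0$-coefficient of $G(L)$ is $R^2_L(z)$ and that $G(L)$ has no positive powers of $a$.

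For the base case $D$ is crossingless, hence a disjoint union of Legendrian unknots and of simple closed curves winding once around the $S^1$-factor. A direct computation settles this case: the $k$-fold stabilized unknot $U_k$ satisfies $[U_k]=a^{-k}\,\frac{a-a^{-1}}{z}$, so $G(U_k)=a^{-(k-1)}z^{-1}-a^{-(k+1)}z^{-1}$, which equals $R^2_{U_1}(z)=z^{-1}$ modulo $p(a^{-1})$ when $k=1$ and lies entirely in $p(a^{-1})$ when $k\ge2$, in accordance with the fact that a stabilized Legendrian link carries no normal ruling; the winding components and the ways oppositely oriented longitudes may be matched contribute in agreement with the defining equality $G(A_\lambda A_{-\mu})=R^2_{A_\lambda A_{-\mu}}(z)$ and the values recalled in Theorem~\ref{thm:R2Basic}, and both $R^2$ and the $a^0$-part of $G$ behave multiplicatively over the disjoint unions that occur.

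For the inductive step I would fix a crossing $c$ of $D$ and apply skein relation (i) there, obtaining $[D]=[D']\pm z\,[D_0]$, where $D_0$ is the oriented resolution at $c$ and $D'$ reverses the over/under strand at $c$. When the two strands through $c$ are oppositely oriented, $D_0$ is exactly the local model of a $2$-graded normal ruling that switches at $c$, so $z\,G([D_0])$ accounts for the part of $R^2_L(z)$ coming from rulings switching there; choosing $c$ suitably --- say, adjacent to a cusp --- the diagram $D'$ can be simplified by a Reidemeister~II move to the smoothed front of a Legendrian link with two fewer crossings whose rulings are precisely those of $L$ not switching at $c$. When the two strands through $c$ are equally oriented, no $2$-graded ruling switches at $c$, and one shows the corresponding term lands in $p(a^{-1})$. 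In each case the inductive hypothesis together with the matching of the skein recursion with the ruling recursion $R^2_L(z)=R^2_{L'}(z)+z\,R^2_{L_0}(z)$ --- which reflects a bijection between the rulings of $L$ and pairs consisting of a ruling away from $c$ and a ruling near $c$ --- yields the claim, all corrections being absorbed into one $p(a^{-1})$.

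The step I expect to be the main obstacle is precisely the $a$-degree bookkeeping that keeps every correction inside $p(a^{-1})$. After a crossing change the diagram $D'$ need not be the smoothed front of an honest Legendrian link, so the inductive hypothesis does not apply to it verbatim; instead one must track the top $a$-degree of the intermediate skein elements and show it never exceeds the admissible bound --- in effect a framed, skein-module refinement of the Thurston--Bennequin estimate in Theorem~\ref{thm:CG} --- and one must check that the crossing $c$ can always be selected so that the Reidemeister~II reduction is available. Once this filtration statement is established, extracting $R^2_L(z)$ as the non-negative-$a$-degree part of $G(L)$ is a formal consequence of the two matching recursions.
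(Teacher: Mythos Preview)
The paper does not actually prove this theorem; it is quoted from \cite{R2} (with a footnote noting the statement is a slight reformulation of Theorem~6.3 there). So there is no in-paper argument to compare yours against, and the relevant question is whether your sketch could be completed.

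Your proposal has a genuine gap at the heart of the inductive step. You assert a recursion $R^2_L(z)=R^2_{L'}(z)+z\,R^2_{L_0}(z)$, identifying rulings that switch at $c$ with rulings of the oriented resolution $D_0$. This identification ignores the normality condition: whether a switch at $c$ is normal depends on where the companion strands of the two crossing strands sit, which is a \emph{global} feature of the ruling, not something visible in the local picture. Consequently there are rulings of $D_0$ that do not come from normal rulings of $L$ switching at $c$, and the proposed bijection fails in general. The usual way around this (as in \cite{R2}) is not a single local skein identity but a careful organization of the full skein-tree expansion of the front in Turaev's basis, together with the estimate of Theorem~\ref{thm:CG}, so that the $a^0$-coefficient is identified with the ruling count only after all terms are collected.

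The two obstacles you flag at the end are also real and not merely technical. First, $D'$ is not a front diagram, so $R^2_{D'}$ is undefined; your fix of choosing $c$ adjacent to a cusp and performing a Reidemeister~II move is not always available (a front can have many crossings far from any cusp, e.g.\ closures of positive braids), so the induction does not always proceed. Second, the ``filtration statement'' you need---that every intermediate skein element has no positive $a$-degree contribution to $G$---is essentially what the theorem asserts, so invoking it without an independent proof is circular. In short, the skein/ruling matching you want cannot be done one crossing at a time; the argument in \cite{R2} instead combines the Thurston--Bennequin bound (Theorem~\ref{thm:CG}) with an analysis of the full expansion in the $A_\lambda A_{-\mu}$ basis.
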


Moreover, a relationship with the Meridian eigenvector basis is established.  In the following, we again abbreviate $Q_{\lambda} = Q_{\lambda, \emptyset}$ and $Q_\mu^* = \psi(Q_\mu) = Q_{\emptyset, \mu}$.

\begin{theorem}[\cite{R2}] \label{thm:InnerProduct} For any $\lambda$ and $\mu$,
\[
G(Q_\lambda Q^*_{\mu}) = \delta_{\lambda,\mu}.
\]
\end{theorem}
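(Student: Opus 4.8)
The plan is to turn the statement into a matrix identity relating two bases of a single graded piece of $\mathcal{C}^+$, and then to evaluate it using the explicit ruling polynomials of the links $A_\nu A_{-\rho}$. It is convenient to introduce the $R$-bilinear form $\langle x,y\rangle:=G(x\cdot\psi(y))$ on $\mathcal{C}^+$; since $\psi$ restricts to an algebra isomorphism carrying $\mathcal{C}^+$ onto $\mathcal{C}^-$ with $\psi(Q_\mu)=Q^*_\mu$, the theorem says exactly that the $Q_\lambda$ are an orthonormal basis for this form. I would first reduce to $|\lambda|=|\mu|$. The Jacobi-Trudi determinant defining $Q_\lambda$ is homogeneous of winding number $|\lambda|$ (each $h_k$ winds $k$ times), so $Q_\lambda Q^*_\mu$ is an $R$-combination of links $A_\nu A_{-\rho}$ with $\nu\vdash|\lambda|$, $\rho\vdash|\mu|$; and when $|\nu|\neq|\rho|$ the diagram $A_\nu A_{-\rho}$ has unequally many up- and down-oriented strands over a generic value of $x$, hence admits no orientation-reversing fixed-point-free involution, hence no $2$-graded normal ruling, so $G(A_\nu A_{-\rho})=R^2_{A_\nu A_{-\rho}}(z)=0$. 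Therefore $G(Q_\lambda Q^*_\mu)=0=\delta_{\lambda,\mu}$ whenever $|\lambda|\neq|\mu|$.

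Next, fixing $n=|\lambda|=|\mu|$, pass to the degree-$n$ summand $\mathcal{C}^+_n=\mathrm{span}_R\{A_\nu:\nu\vdash n\}$, which also carries the Schur-type basis $\{Q_\lambda\}_{\lambda\vdash n}$. Writing $C=(c_{\lambda\nu})$ for the change-of-basis matrix, $Q_\lambda=\sum_\nu c_{\lambda\nu}A_\nu$ (so also $Q^*_\mu=\sum_\rho c_{\mu\rho}A_{-\rho}$, since $\psi$ is $R$-linear with $\psi(A_m)=A_{-m}$), and $M$ for the Gram matrix $M_{\nu\rho}=\langle A_\nu,A_\rho\rangle=R^2_{A_\nu A_{-\rho}}(z)$, expanding and using the $R$-linearity of $G$ gives $G(Q_\lambda Q^*_\mu)=(CMC^{\mathsf T})_{\lambda\mu}$, so the theorem becomes the matrix identity $CMC^{\mathsf T}=I$ over $R$. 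Here $C$ is pure symmetric-function combinatorics — it comes from the defining determinant for $Q_\lambda$ together with the expansion of the $h_k$ in the basis $\{A_\nu\}$ — while $M$ is supplied by the explicit computation of the $2$-graded ruling polynomials of the links $A_\nu A_{-\rho}$ carried out in \cite{R2} and recalled in Theorem~\ref{thm:R2Basic}. Under the ring isomorphism $\mathcal{C}^+\cong\Lambda$ sending $Q_\lambda$ to the Schur function $s_\lambda$ (so that $A_m$ becomes a $z$-deformed power sum), the identity $CMC^{\mathsf T}=I$ is a $z$-deformation of the classical column-orthogonality relation $\sum_\nu\chi^\lambda_\nu\chi^\mu_\nu/z_\nu=\delta_{\lambda\mu}$ — indeed $M$ specializes at $z=0$ to $\mathrm{diag}(z_\nu)$, each $R^2_{A_\nu A_{-\nu}}(0)$ counting the switch-free $2$-graded normal rulings of $A_\nu A_{-\nu}$ — and proving it reduces to checking that the deformation of $M$ recorded by the ruling polynomials is precisely the one under which the Schur functions stay orthonormal.

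The crux, as I see it, is the computation of $M$ itself. Counting $2$-graded normal rulings of $A_\nu A_{-\rho}$ is delicate because the front is a vertical stack of annular braid closures, and a ruling must close up consistently after one traversal of the $S^1$ factor, where the monodromy permutes the up-strands and the down-strands by permutations of cycle types $\nu$ and $\rho$; this couples the admissible switch-patterns to the cycle structure, and arranging the count so that conjugating by $C$ collapses it to character orthogonality is exactly the work done in \cite{R2}. One might hope to bypass the off-diagonal vanishing using the meridian maps — the $Q_\lambda=Q_{\lambda,\emptyset}$ are eigenvectors of $\varphi$ with pairwise distinct eigenvalues $c_{\lambda,\emptyset}$, so a $\varphi$-invariance $\langle\varphi x,y\rangle=\langle x,\varphi y\rangle$ would force $\langle Q_\lambda,Q_\mu\rangle=0$ for $\lambda\neq\mu$ and leave only the normalization $\langle Q_\lambda,Q_\lambda\rangle=1$ — but establishing such an invariance is a question about sliding a meridian loop through a stacked diagram and does not appear easier than the direct approach, so I would not count on it. (Alternatively, Koike's expansion of $Q_\lambda Q^*_\mu$ in the meridian-eigenvector basis $\{Q_{\alpha,\beta}\}$, in which the coefficient of the unit $1=Q_{\emptyset,\emptyset}$ is $\delta_{\lambda,\mu}$, reduces the theorem to $G(Q_{\alpha,\beta})=\delta_{\alpha,\emptyset}\delta_{\beta,\emptyset}$ — but that again rests on the same ruling-polynomial input.)
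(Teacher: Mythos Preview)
Your proposal is essentially the same approach as the paper's: both introduce the bilinear form $\langle x,y\rangle=G(x\cdot\psi(y))$ on $\mathcal{C}^+$ and reduce the statement to the assertion, established in \cite{R2}, that this form (equivalently, your matrix identity $CMC^{\mathsf T}=I$) makes the Schur-type basis $\{Q_\lambda\}$ orthonormal. The paper is simply more direct, invoking Theorem~5.6 of \cite{R2} for exactly that fact rather than re-deriving the surrounding framework; your closing remark about Koike's expansion is in fact the basis of the paper's alternate proof in Section~5.
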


\begin{proof}
In Section 5 of \cite{R2}, the  bilinear form $(\cdot, \cdot): \C^+ \times \C^+ \rightarrow R$ defined by $(Q_\lambda, Q_\mu) = \delta_{\lambda,\mu}$ is studied.  In particular, Theorem 5.6 of \cite{R2} shows that $(A_\lambda, A_\mu) = R^2_{A_\lambda A_{-\mu}}(z)$.  Let $g:\C^+ \otimes \C^+ \rightarrow R$ denote the linear map corresponding to $(\cdot, \cdot)$.  We can thus write $G$ as the composition $\hfs \cong \C^+ \otimes \C^- \stackrel{\mathit{id} \otimes \psi}{\rightarrow} \C^+ \otimes \C^+ \stackrel{g}{\rightarrow} R$, and the formula follows.
\end{proof}

\subsection{Legendrian links in $S^1\times S^2$}

The manifold $S^1 \times S^2$ admits a unique tight contact structure.  Work of Gompf \cite{G} allows Legendrian links in $S^1\times S^2$, as well as the equivalence relation of Legendrian isotopy, to be presented in a diagrammatic manner.  Gompf constructs $S^1 \times S^2$ with its tight contact structure as the result of removing two appropriately chosen open balls from $S^3$, viewed as $\R^3$ with a point at infinity, and gluing their boundary spheres, $S_1$ and $S_2$, via an orientation reversing diffeomorphism.  Some care is taken so that the standard contact structure on $\R^3$ produces a contact structure after the identification.  A Legendrian link $L \subset S^1 \times S^2$ can then be studied via its front projection.  

Gompf shows in \cite{G}, Theorem 2.2 that after a Legendrian isotopy the front projection of $L$ may be put into a standard form where the projection lies between the identified spheres $S_1$ and $S_2$.  Moreover, $L$ can be assumed to intersect the spheres $S_1$ and $S_2$ in a standard manner, so that $L$ may be represented by what Gompf calls a {\it Legendrian link diagram in standard form} which is entirely equivalent to a front projection of a link in $S^1\times \R^2$.  In addition, Gompf shows that Legendrians $L_1$ and $L_2$ with diagrams in standard form are Legendrian isotopic if and only if their diagrams are related by a combination of  Legendrian isotopies in $S^1\times \R^2$ and the Move B (along with its reflection across a vertical axis) which is pictured in Figure \ref{fig:MoveB}.

\begin{figure}
\centerline{ $\raisebox{-6ex}{\includegraphics[scale=.5]{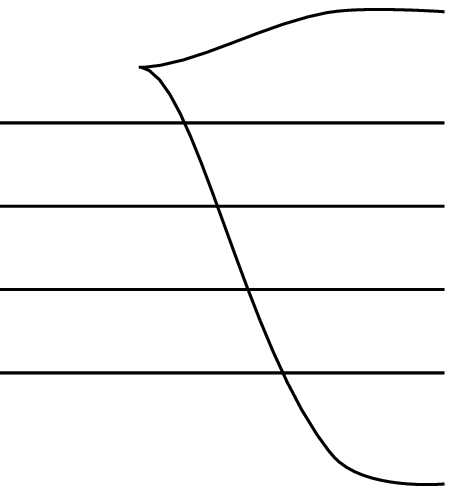}} \quad \leftrightarrow \quad \raisebox{-6ex}{\includegraphics[scale=.5]{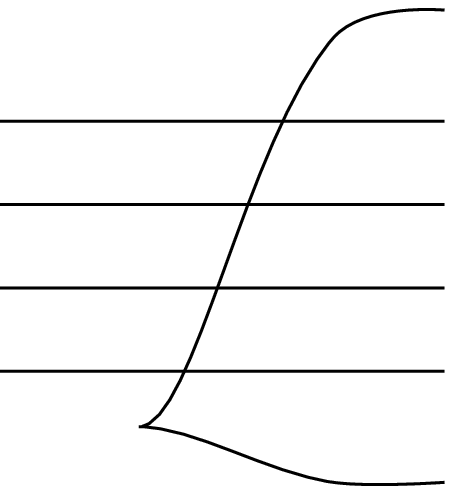}}$}
\caption{Move B}
\label{fig:MoveB}
\end{figure}

Using this diagrammatic perspective, the notion of normal ruling makes sense for a Legendrian link in $S^1 \times S^2$.

\begin{theorem}  \label{thm:Invariance} The $2$-graded ruling polynomial is an invariant of Legendrian links in $S^1 \times S^2$.
\end{theorem}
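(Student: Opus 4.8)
The plan is to combine Gompf's diagrammatic description of Legendrian isotopy in $S^1 \times S^2$ with the known invariance of $R^2$ under Legendrian isotopy in $\st$. By the result of Gompf \cite{G} recalled above, two Legendrian link diagrams in standard form represent Legendrian isotopic links in $S^1 \times S^2$ if and only if they are related by a finite sequence of Legendrian isotopies in $\st$ together with applications of Move B and its reflection across a vertical axis. The $2$-graded ruling polynomial is already a Legendrian isotopy invariant in $\st$ by Chekanov--Pushkar \cite{ChP}, as stated above, so the whole theorem reduces to the assertion that $R^2_L(z)$ is unchanged when the front diagram of $L$ is altered by a single Move B; the reflected version is handled by an identical argument.

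To verify invariance under Move B, I would use that the quantity $j(\rho) = \#\text{switches} - \#\text{right cusps}$ of a normal ruling is additive with respect to slicing a front diagram along vertical lines. Let $D$ and $D'$ be two front diagrams, agreeing outside a local region $U$ (contained between two vertical lines lying in the common part), that are related by one Move B, and let $U'$ denote the corresponding local tangle in $D'$. Restricting a $2$-graded normal ruling to the complement of $U$ determines an orientation-reversing fixed-point-free involution $b$ of the (finitely many) strands meeting the two bounding vertical lines; this ``boundary condition'' $b$ is the same for $D$ and $D'$. Grouping rulings by $b$ gives
\[
R^2_D(z) = \sum_{b} N(b)\, P_U(b), \qquad
P_U(b) = \sum_{\rho_U} z^{\,j_U(\rho_U)},
\]
where the outer factor $N(b) = \sum_{\rho_V} z^{\,j_V(\rho_V)}$ records the contribution of the unchanged part $V$ of the diagram and is common to $D$ and $D'$, and $P_U(b)$ sums over completions of $b$ to a normal ruling across the local tangle, weighted by the number of switches inside $U$ minus the number of right cusps inside $U$. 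The same formula holds for $D'$, so it suffices to establish the purely local identity $P_U(b) = P_{U'}(b)$ for every admissible $b$.

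The main obstacle is precisely this finite local computation, which is where all the work lies. For each boundary pairing $b$ one must enumerate the normal rulings of the two local tangles, check that the normality condition at switches and the orientation-reversing ($2$-graded) requirement eliminate exactly the expected configurations, and keep careful track of how the extra cusps and crossings produced by Move B affect $\#\text{switches}$ and $\#\text{right cusps}$. A mild additional subtlety is that the local region sits against the seam $x=0 \sim x=1$, so one must correctly record which strands are oriented toward, and which away from, the seam when reading off $b$ and the local rulings. Once this bookkeeping is in place, the identity $P_U(b) = P_{U'}(b)$ follows by direct case-by-case inspection, completing the proof.
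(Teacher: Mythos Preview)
Your reduction to checking Move B is correct and matches the paper.  The local-tangle framework you set up is sound in principle, but the sentence ``the identity $P_U(b) = P_{U'}(b)$ follows by direct case-by-case inspection'' is where the argument breaks down.  The local picture in Move B involves the moving strand crossing \emph{every} strand that passes through the seam, so the number of crossings (and hence the number of potential switch configurations) is not bounded a priori; there is no finite list of cases to inspect.  A genuine argument has to work uniformly in the number of parallel strands, and you have not supplied one.

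The paper's proof sidesteps any enumeration by proving a stronger, qualitative fact: in either diagram of Move B, \emph{no} normal ruling can have a switch at any of the pictured crossings.  The reason is a cascading application of the normality condition.  Starting from the cusp, the two emerging strands are paired; at the first switch encountered along the lower strand the normality condition forces the companion of the lower switching strand to lie below the switch, producing a new pair of companion strands that are headed toward an intersection unless another switch intervenes---but that switch is subject to the same constraint.  Iterating, one eventually exhausts the finitely many crossings and is left with two companion strands that actually meet, which is forbidden.  Once one knows there are no switches in the region, the rulings on the two sides are in an obvious $j$-preserving bijection, and $R^2$ is unchanged.  This is the missing idea your proposal needs in place of the unperformed case analysis.
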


\begin{proof}  We check invariance of $R^2_L(z)$ under Move B.  Let $L_1$ and $L_2$ denote two diagrams related as in Move B.  There is a clear bijection between those normal rulings of $L_1$ and $L_2$ that do not have switches at any of the pictured crossings, and this bijection preserves the exponent $j(\rho)$ involved in the definition of the ruling polynomial.  To complete the proof we show that these are the only normal rulings of $L_1$ and $L_2$.

Indeed, suppose that $L_1$ has a normal ruling with a switch occurring among the pictured crossings.  (A symmetric argument will apply to $L_2$.)  The strands coming out of the cusp are paired until the lower strand reaches the first such switch.  There, the normality condition implies that the companion strand of the lower half of the switch must be located below the switch.  Moving to the right of this switch, this new pair of strands will intersect unless the upper strand switches again before they meet.  However, the normality condition shows that the companion strand of the lower half of this next switch must also be located below the switch.  Again, we have two strands that are paired by the ruling and will intersect unless a switch occurs along the upper of the two strands.  There are only finitely many crossings, so continuing in this manner will eventually produce two strands paired by the ruling that intersect at a crossing.  
\end{proof}

\section{The $S^1\times S^2$ HOMFLY-PT polynomial}

In this section we prove Theorem \ref{the:HOMFLY} and provide an example to show how this result allows the computation of the HOMFLY-PT invariant for links in $S^1\times S^2$.  

\subsection{Description of the $S^1\times S^2$ invariant in terms of the meridian eigenvector basis}

The values of the $S^1\times S^2$ invariant, $F$, on the meridian eigenvector basis vectors are quite simple.
 
\begin{theorem} \label{the:F1} Let $F: \hfs \rightarrow \Q(a,s)$ be the $S^1\times S^2$ HOMFLY-PT invariant.  
Then, $F(Q_{\emptyset, \emptyset}) = 1$ and $F(Q_{\lambda, \mu}) = 0$ if either $\lambda$ or $\mu$ is non-empty. 

Moreover, for any framed link $L \subset \st$, we have $F(L) \in R$.
\end{theorem}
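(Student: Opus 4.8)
The plan is to establish the two assertions of Theorem~\ref{the:F1} separately, as the first (the values on the meridian eigenvector basis) is essentially a formal consequence of the meridian eigenvector property, while the second (integrality) requires a more careful argument. First I would prove $F(Q_{\lambda,\mu}) = 0$ for $(\lambda,\mu) \neq (\emptyset,\emptyset)$. The key observation is that the inclusion $i : S^1 \times D^2 \hookrightarrow S^1\times S^2$ identifies a link with a copy of the same link together with an adjacent meridian, because the meridian $\mathit{pt}\times S^1$ bounds a disk in $S^1\times S^2$ after the surgery that constructs $S^1\times S^2$ from $S^1\times D^2$ — geometrically this is Move A. Consequently $i_* \circ \varphi = i_*$ as maps $\hfs \to S(S^1\times S^2)$, where $\varphi$ is the meridian map, and likewise $i_*\circ\bar\varphi = i_*$; unwinding the new meridian component in $S^1\times S^2$ costs only the unknot factor $\tfrac{a-a^{-1}}{z}$, but the correct normalization is that the meridian simply bounds a disk and can be absorbed. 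Hence for any $x\in\hfs$ we have $F(\varphi(x)) = f(i_*\varphi(x)) = f(i_*(x)) = F(x)$. Applying this with $x = Q_{\lambda,\mu}$ and Theorem~\ref{lem:HFEig} gives $c_{\lambda,\mu} F(Q_{\lambda,\mu}) = F(Q_{\lambda,\mu})$, so $(c_{\lambda,\mu}-1)F(Q_{\lambda,\mu}) = 0$. Since $c_{\lambda,\mu}$ is a nonconstant rational function in $a,s$ unless $\lambda=\mu=\emptyset$ (one checks $c_{\emptyset,\emptyset} = \tfrac{a-a^{-1}}{z}$ and that $C_\lambda(s^2) - s$-degree considerations force $c_{\lambda,\mu}\neq 1$ when $(\lambda,\mu)\neq(\emptyset,\emptyset)$), the scalar $c_{\lambda,\mu}-1$ is a unit in $\Q(a,s)$, forcing $F(Q_{\lambda,\mu}) = 0$. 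Together with $F(Q_{\emptyset,\emptyset}) = F(1) = f(\emptyset) = 1$, this gives the first claim. One subtlety I would need to handle carefully is the precise effect of $i_*$ on a meridian — whether $\varphi$ is absorbed exactly or up to the unknot scalar — and make sure the eigenvalue equation is set up with the right normalization; this is where I expect to have to be most careful.

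For the integrality statement, the point is that $Q_{\lambda,\mu}$ and $A_\lambda A_{-\mu}$ are both $R$-bases of $\hfs$, and $F$ vanishes on all but one basis vector $Q_{\emptyset,\emptyset}$. So I would expand a given $L$, or rather $A_\lambda A_{-\mu}$, in the meridian eigenvector basis: $A_\lambda A_{-\mu} = \sum_{\nu,\kappa} b^{\lambda\mu}_{\nu\kappa}\, Q_{\nu,\kappa}$ with $b^{\lambda\mu}_{\nu\kappa}\in R$, whence $F(A_\lambda A_{-\mu}) = b^{\lambda\mu}_{\emptyset\emptyset} \in R$. Since every $L\subset\st$ expands over Turaev's geometric basis with coefficients in $\Z[a^{\pm1},z^{\pm1}]\subset R$, it follows that $F(L)\in R$ for all framed links. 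The main obstacle here is not the logic but knowing that the change-of-basis coefficients between the Turaev basis and the meridian eigenvector basis lie in $R$ rather than in the larger ring $R'$ of Gilmer–Zhong; this should follow from the explicit determinantal construction of $Q_{\lambda,\mu}$ out of the $h_n$ (whose definition already only involves denominators $s^k - s^{-k}$, i.e.\ elements of $R$) together with Turaev's theorem that the $A_\lambda A_{-\mu}$ form an $R$-basis, so that the transition matrices in both directions have entries in $R$.

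Alternatively, and perhaps more cleanly for the integrality claim, I could avoid the meridian basis entirely and argue directly: $F = G$ will be proven in Theorem~\ref{the:HOMFLY}, and $G$ is defined on the Turaev basis by $G(A_\lambda A_{-\mu}) = R^2_{A_\lambda A_{-\mu}}(z) \in \Z[z^{\pm1}]\subset R$, so integrality is immediate for $G$, hence for $F$. But since Theorem~\ref{the:F1} is logically prior to Theorem~\ref{the:HOMFLY} in the paper's organization, I would prove the integrality statement in a self-contained way from the meridian basis as above, reserving the identification $F=G$ for later. In summary, the structure is: (1) show $i_*$ is unchanged by inserting a meridian, hence $F\circ\varphi = F$; (2) combine with the eigenvalue equation of Theorem~\ref{lem:HFEig} to kill $F(Q_{\lambda,\mu})$ for $(\lambda,\mu)\neq(\emptyset,\emptyset)$; (3) use that the change of basis between the Turaev and meridian bases is defined over $R$ to conclude $F(L)\in R$. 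Step (1), getting the normalization of the meridian exactly right, is the crux.
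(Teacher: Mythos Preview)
Your overall strategy matches the paper's: use Move~A to relate $\varphi$ to the trivial unknot operation under $i_*$, combine with the eigenvalue equation from Theorem~\ref{lem:HFEig}, and then read off integrality from the fact that $\{Q_{\lambda,\mu}\}$ is an $R$-basis. The integrality argument in your step~(3) is exactly what the paper does, only spelled out in more detail.

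However, the normalization issue you flag as ``the crux'' is in fact resolved incorrectly in your write-up. You assert $i_*\circ\varphi = i_*$ on the grounds that the meridian bounds a disk and ``can be absorbed.'' Bounding a disk makes the new component an \emph{unknot}, not the empty link; in the skein module an unknot contributes the scalar $\frac{a-a^{-1}}{z} = c_{\emptyset,\emptyset}$ by relation~(iii). The correct identity (which is what Move~A actually gives) is
\[
i_*(\varphi(X)) \;=\; c_{\emptyset,\emptyset}\, i_*(X),
\]
and hence $c_{\lambda,\mu}\,F(Q_{\lambda,\mu}) = c_{\emptyset,\emptyset}\,F(Q_{\lambda,\mu})$, i.e.\ $(c_{\lambda,\mu}-c_{\emptyset,\emptyset})\,F(Q_{\lambda,\mu})=0$. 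One then checks from the explicit formula in Theorem~\ref{lem:HFEig} that $c_{\lambda,\mu}-c_{\emptyset,\emptyset} = z\bigl(a\,C_\lambda(s^2)-a^{-1}C_\mu(s^{-2})\bigr)$ vanishes in $\Q(a,s)$ only when $\lambda=\mu=\emptyset$, which gives the conclusion. Your version $(c_{\lambda,\mu}-1)F(Q_{\lambda,\mu})=0$ is not merely cosmetically off: since $c_{\emptyset,\emptyset}=\frac{a-a^{-1}}{z}\neq 1$, it would force $F(Q_{\emptyset,\emptyset})=0$, contradicting $F(1)=1$. So the choice you make between ``absorbed exactly'' and ``up to the unknot scalar'' must go the other way; once corrected, your proof coincides with the paper's.
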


\begin{proof}  Recall that $F$ is the composition $\hfs \stackrel{i_*}\rightarrow \hfsone \stackrel{f}{\rightarrow} \Q(a,s)$ where $i_*$ is induced by inclusion.

An application of Move A shows that for any $L \subset \st$ the framed links $\varphi(L)$ and $L \sqcup \fig{HSR6.eps}$ are isotopic in $S^1 \times S^2$.  
At the level of skein modules this gives us the equality
\[
i_*(\varphi(X)) = c_{\emptyset, \emptyset} i_*(X)
\]
for any $X \in \hfs$.  Thus, we can compute
\[
c_{\lambda, \mu} F(Q_{\lambda,\mu}) = F( c_{\lambda, \mu} Q_{\lambda, \mu} ) = f( i_*( \varphi(Q_{\lambda, \mu})) ) =  f( c_{\emptyset, \emptyset}  i_*(Q_{\lambda,\mu})) = c_{\emptyset, \emptyset} F(Q_{\lambda, \mu}).
\]
From Theorem \ref{lem:HFEig} we deduce that $F(Q_{\lambda, \mu}) = 0$ unless $\lambda = \mu = \emptyset$.  That $F(Q_{\emptyset, \emptyset})=1$ follows from the definition of $f$.

The second claim holds since by Theorem \ref{lem:HFEig} any framed link is an $R$-linear combination of the $Q_{\lambda, \mu}$.
\end{proof}

\subsection{Legendrian meridian map}  Given a Legendrian link $L \subset S^1 \times D^2$ we let $\nu(L)$  denote the Legendrian link obtained from adding a Legendrian unknot to the front diagram of $L$ as pictured in Figure \ref{fig:LMerid}.  We use $\bar{\nu}(L)$ for the same construction with the opposite orientation taken on the new unknotted component.  It is clear that in the HOMFLY-PT skein module $\hfs$ we have 
\begin{equation} \label{eq:LMerid}
\nu(L) = a^{-1} \varphi(L) \quad \mbox{and} \quad \bar{\nu}(L) = a^{-1} \bar{\varphi}(L).
\end{equation}

\begin{figure}
\centerline{ \raisebox{-6ex}{\includegraphics[scale=.5]{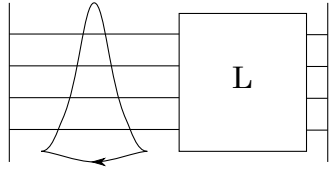}} }
\caption{The Legendrian meridian map applied to a Legendrian link $L$.}
\label{fig:LMerid}
\end{figure}

\begin{lemma} \label{lem:LMerid} For any Legendrian $L$, we have $R^2_{\nu(L)}(z) = R^2_{\bar{\nu}(L)}(z) = z^{-1} R^2_L(z)$.
\end{lemma}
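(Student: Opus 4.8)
The plan is to prove the lemma by setting up a bijection between the $2$-graded normal rulings of $L$ and those of $\nu(L)$ under which the exponent $j(\rho)=\#\mathrm{switches}-\#\mathrm{right\ cusps}$ drops by exactly $1$; since $R^2_L(z)=\sum_\rho z^{j(\rho)}$, this is exactly what is needed to conclude $R^2_{\nu(L)}(z)=z^{-1}R^2_L(z)$. Write $U$ for the Legendrian unknot added to the front of $L$ in Figure~\ref{fig:LMerid}. The relevant structural facts about $U$ are that it contributes exactly one right cusp (and one left cusp), that it is embedded in the front (so it has no self-crossings), that its two strands carry opposite orientations, and that $U$ together with $L$ accounts for every crossing and cusp of $\nu(L)$.

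First I would treat the easy direction. Given a $2$-graded normal ruling $\rho$ of $L$, extend it to $\nu(L)$ by pairing the two strands of $U$ with one another and declaring every crossing between $U$ and $L$ to be a non-switch. This pairing is orientation-reversing, so the extended involutions are $2$-graded; it is forced (hence consistent) at the two cusps of $U$; it is never disturbed because we create no switch along $U$; and the normality condition, being a constraint only at switches, still holds. So $\rho$ always extends, and with no new switches. The substance of the lemma is the reverse direction: I claim that in \emph{any} $2$-graded normal ruling $\rho'$ of $\nu(L)$ the two strands of $U$ are paired with each other and no crossing between $U$ and $L$ is a switch. If $U$ is disjoint from the front of $L$ this is immediate from continuity of $\rho'_x$ and the behavior at the cusps of $U$; in general I would argue exactly as in the proof of Theorem~\ref{thm:Invariance}: at a cusp of $U$ its two strands are paired together, and if a switch occurred at a crossing between $U$ and $L$, then following the strand of $U$ out of that switch and repeatedly applying the normality condition pins each successive companion strand on the appropriate side and forces the next crossing to be a switch as well; since there are only finitely many crossings this eventually produces two strands that $\rho'$ pairs yet which must cross, a contradiction. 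Hence no such switch occurs, the two strands of $U$ stay paired from one cusp to the other, and $\rho'$ restricts to a $2$-graded normal ruling of $L$. The two constructions are then mutually inverse, and I expect this reverse direction — in particular verifying that normality genuinely propagates a switch along $U$ to a forced crossing, which requires care about the vertical ordering of $U$, the $L$-strands it meets, and their companions — to be the only real obstacle.

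Finally, under the bijection $\rho\leftrightarrow\rho'$ we have $\#\mathrm{switches}(\rho')=\#\mathrm{switches}(\rho)$ while $\nu(L)$ has one more right cusp than $L$, so $j(\rho')=j(\rho)-1$ and therefore $R^2_{\nu(L)}(z)=\sum_{\rho'}z^{j(\rho')}=z^{-1}\sum_\rho z^{j(\rho)}=z^{-1}R^2_L(z)$. The case of $\bar\nu(L)$ is identical: reversing the orientation of $U$ leaves its two strands oppositely oriented, so the extension above is still $2$-graded and the switch-chasing argument is unchanged, giving $R^2_{\bar\nu(L)}(z)=z^{-1}R^2_L(z)$ as well.
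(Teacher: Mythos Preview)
Your argument is correct and is essentially the paper's proof, just unpacked. The paper observes that a single application of Move~B carries $\nu(L)$ to a front in which the unknot $U$ is disjoint from $L$, and then invokes Theorem~\ref{thm:Invariance} (invariance of $R^2$ under Move~B) to conclude $R^2_{\nu(L)}=R^2_{L\sqcup U}=z^{-1}R^2_L$; you instead build the bijection of rulings directly, but the substantive step---showing that no crossing between $U$ and $L$ can be a switch via the cusp-to-switch normality cascade---is exactly the content of the proof of Theorem~\ref{thm:Invariance}, which you explicitly invoke. So the two proofs share the same engine; the paper simply packages it as ``apply Move~B and cite Theorem~\ref{thm:Invariance},'' while you reproduce the switch-chasing argument in place.
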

\begin{proof}
This follows from Theorem \ref{thm:Invariance} since after an application of Move B the unknotted component may be isotoped to be disjoint from the rest of the diagram.  
\end{proof}

\subsection{Proof of Theorem \ref{the:HOMFLY}}

To establish Theorem \ref{the:HOMFLY} we need to show that the homomorphisms $F,G \in \mathit{Hom}_R(\hfs, R)$ agree.  Here, $F$ denotes the $S^1 \times S^2$ HOMFLY-PT invariant and $G$ is the homomorphism discussed in Section \ref{sec:NR} which was defined by $G(A_\lambda A_{-\mu}) = R^2_{A_\lambda A_{-\mu}}(z)$.  Our proof is based on the following lemma which again makes use of the notation $p(a^{-1})$ to denote a term containing only negative powers of $a$.

\begin{lemma} \label{thm:RF} For any Legendrian link $L \subset \st$, 
\[
G(L) = F(L) + p(a^{-1}).
\]
\end{lemma}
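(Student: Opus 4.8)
The plan is to reduce the statement for a general Legendrian link $L$ to the case of Turaev basis elements $A_\lambda A_{-\mu}$ by a dimension-of-$a$-degree induction that exploits the meridian maps on both sides. Recall we have three maps to play with: $F$ is the $S^1\times S^2$ invariant, $G$ is defined by $G(A_\lambda A_{-\mu}) = R^2_{A_\lambda A_{-\mu}}(z)$, and both satisfy simple identities under the (Legendrian) meridian map. On the $F$ side, the key fact from the proof of Theorem \ref{the:F1} is that $i_*(\varphi(X)) = c_{\emptyset,\emptyset}\, i_*(X)$, so $F(\varphi(X)) = \tfrac{a-a^{-1}}{z} F(X)$, and hence by \eqref{eq:LMerid} we get $F(\nu(L)) = a^{-1}\tfrac{a-a^{-1}}{z}F(L) = \tfrac{1-a^{-2}}{z}F(L)$. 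On the $G$ side, Lemma \ref{lem:LMerid} gives $G(\nu(L)) = R^2_{\nu(L)}(z) + p(a^{-1}) = z^{-1}R^2_L(z) + p(a^{-1})$ by Theorem \ref{thm:R2}; combining this with Theorem \ref{thm:R2} applied to $L$ itself, $G(\nu(L)) = z^{-1}(G(L) - p(a^{-1})) + p(a^{-1}) = z^{-1}G(L) + p(a^{-1})$.

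Now subtract. Writing $E(L) := G(L) - F(L)$, the above gives $E(\nu(L)) = z^{-1}G(L) - \tfrac{1-a^{-2}}{z}F(L) + p(a^{-1}) = z^{-1}E(L) + z^{-1}a^{-2}F(L) + p(a^{-1})$. Since $F(L) \in R$ is a Laurent polynomial, the term $z^{-1}a^{-2}F(L)$ is of the form $p(a^{-1})$, so in fact $E(\nu(L)) = z^{-1}E(L) + p(a^{-1})$. The strategy is then to show $E(L)$ contains only negative powers of $a$ by induction on the number of components (or on the total winding number) of $L$, using that $\nu$ raises the "size'' of a link while this relation lets us pass the $a$-degree control back and forth. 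More precisely, I would argue: given any Legendrian $L$, apply $\nu$ enough times so that the resulting link can be expanded in Turaev's basis with all basis elements having strictly positive winding in both directions, where a direct check (or the base case of the induction) gives the claim; then peel off the $\nu$'s one at a time using $E(\nu(L)) = z^{-1}E(L) + p(a^{-1})$, noting that multiplying by $z^{-1}$ and adding $p(a^{-1})$ preserves the property "contains only negative powers of $a$.''

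Actually, I expect the cleanest route is to reverse this and induct directly on Turaev basis elements: write $[L] = \sum c_{\lambda\mu}(a,z) A_\lambda A_{-\mu}$ and recall (Section \ref{sec:NR}) that $\deg_a H_L \le 0$, i.e. all $c_{\lambda\mu}$ lie in $\Z[a^{-1},z^{\pm1}]$ after dividing by the writhe — but more usefully, that the $a$-degree of $[L]$ as a framed polynomial is controlled. The main obstacle is pinning down the base case and making the induction variable precise: one needs a class of Legendrian links — e.g. the front diagrams realizing $A_\lambda A_{-\mu}$ themselves, or links with enough cusps — on which $G(L) = F(L) + p(a^{-1})$ can be verified by hand, and one needs to know that repeatedly applying $\nu$ (or $\bar\nu$) eventually lands every link in this class while the recursion $E(\nu(L)) = z^{-1}E(L)+p(a^{-1})$ is reversible in the relevant sense. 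I would handle this by choosing the induction to run on $\max_i \lambda_i + \max_j \mu_j$ (or a similar measure of complexity of the Turaev expansion), using that $\nu$ shifts this measure predictably together with Theorem \ref{thm:InnerProduct} and the meridian eigenvalue formula of Theorem \ref{lem:HFEig} to organize the bookkeeping; the genuinely delicate point, and the one I'd spend the most care on, is verifying that the error terms one accumulates really are of type $p(a^{-1})$ at every stage, which is where Theorem \ref{thm:CG}'s consequence $\deg_a H_L \le 0$ does the essential work.
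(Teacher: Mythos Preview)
Your recursion contains a genuine error, and more seriously, the inductive scheme you outline cannot be made to terminate.

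First, the step ``Since $F(L)\in R$ is a Laurent polynomial, the term $z^{-1}a^{-2}F(L)$ is of the form $p(a^{-1})$'' is unjustified: knowing that $F(L)$ is a Laurent polynomial in $a$ says nothing about its degree, and $a^{-2}F(L)$ has only negative $a$-powers precisely when $\deg_a F(L)\le 1$, which is essentially the assertion you are trying to prove. Thus your clean recursion $E(\nu(L))=z^{-1}E(L)+p(a^{-1})$ is not available a priori; the honest recursion is $E(\nu(L))=z^{-1}E(L)+z^{-1}a^{-2}F(L)+p(a^{-1})$ with an uncontrolled middle term.

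Second, and more fundamentally, the induction has no base case. Applying $\nu$ adds a null-homologous meridian; in the skein module $\nu(L)=a^{-1}\varphi(L)$, and $\varphi$ acts diagonally on the $Q_{\lambda,\mu}$-basis, so the support of $L$ in either the $Q_{\lambda,\mu}$-basis or Turaev's basis is unchanged by iterating $\nu$ or $\bar\nu$. No amount of applying $\nu$ will land you in a class of links where the lemma is independently known. In particular, your proposed base class---the Turaev basis links $A_\lambda A_{-\mu}$---is not a base case at all: for those links $G(A_\lambda A_{-\mu})=R^2_{A_\lambda A_{-\mu}}(z)$ by definition, so the lemma there reads $F(A_\lambda A_{-\mu})=R^2_{A_\lambda A_{-\mu}}(z)+p(a^{-1})$, which is exactly the main Theorem~\ref{the:HOMFLY} you are ultimately trying to establish. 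There is no ``direct check'' available.

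The paper's argument uses the same ingredients you assembled (Theorem~\ref{thm:R2}, Lemma~\ref{lem:LMerid}, the meridian eigenvalue formula, and $F\circ\varphi=c_{\emptyset,\emptyset}F$) but does \emph{not} induct. Instead it computes $G(\nu^k\bar\nu^l(L))$ two different ways for \emph{all} pairs $0\le k,l\le |S|-1$ at once, producing a square linear system in the top-$a$-degree coefficients $w_{\lambda,\mu}$ of $x_{\lambda,\mu}:=f_{\lambda,\mu}G(Q_{\lambda,\mu})$. The coefficient matrix is the Kronecker square of a Vandermonde matrix built from the numbers $zC_\lambda(s^2)+z^{-1}$; these are distinct because content polynomials determine partitions, so the system is nonsingular. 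Its unique solution forces all the top coefficients to vanish above $a^0$ and pins down the $a^0$-coefficient as $R^2_L(z)$ at $(\emptyset,\emptyset)$ and $0$ elsewhere. The use of both $\nu$ and $\bar\nu$ simultaneously (giving the Kronecker-of-Vandermondes structure) is exactly what separates the $(\lambda,\mu)$-contributions; a single recursion in one direction, iterated, cannot do this.
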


Before proving the lemma we complete the proof of Theorem \ref{the:HOMFLY} by showing that the $p(a^{-1})$ term must be zero.  Recall that the coefficient ring $R$ has the form $R_0[a^{\pm 1}]$ where $R_0$ is a subring of $\Q(s)$.  Clearly, Lemma \ref{thm:RF} extends to show that if $y \in \hfs$ is an $R_0$-linear combination of Legendrian links then 
\begin{equation} \label{eq:GFp}
G(y) = F(y) + p(a^{-1}).
\end{equation}  
  The positive permutation braids $\omega_\pi$ used in the definition of $h_n$ and $h_n^*$ are Legendrian links.  Indeed, since at all crossings the strand with lesser slope appears on top, they are represented by front diagrams without cusps.  Since the product of Legendrian links is again Legendrian, it follows that the meridian eigenvector basis vectors $Q_{\lambda, \mu}$ are $R_0$-linear combinations of Legendrian links.  Then, equation (\ref{eq:GFp}) gives
\begin{equation} \label{eq:Almost}
G(Q_{\lambda, \mu}) = \left\{ \begin{array}{cr} 1 + p(a^{-1}) & \mbox{if $\lambda = \mu = \emptyset$, and}  \\
																								0 + p(a^{-1}) & \mbox{else.} \end{array} \right.
\end{equation}

On the other hand, since $h_n = Q_{(n)}$ and $h_n^* = Q^*_{(n)}$ it follows from the definition of the $Q_{\lambda, \mu}$ and from Theorem \ref{thm:ProdBasis} that each $Q_{\lambda, \mu}$ is an integer linear combination of the basis elements $\{ Q_\lambda Q^*_{\mu}\}$ from Theorem \ref{thm:ProdBasis}.  We can then apply Theorem \ref{thm:InnerProduct} to conclude that $G(Q_{\lambda, \mu})$ is an integer.  Thus, the $p(a^{-1})$ terms in (\ref{eq:Almost}) must vanish, and we have established that $F = G$ which completes the proof.

\begin{proof}[Proof of Lemma \ref{thm:RF}.]  Let $L \subset S^1\times D^2$ be a Legendrian link.  In $\hfs$ we can write $L$ as a linear combination
\[
L = \sum_{\lambda, \mu} f_{\lambda,\mu} Q_{\lambda, \mu}
\]
with coefficients $f_{\lambda,\mu} \in R$.  

Using Theorem \ref{thm:R2} along with Lemma \ref{lem:LMerid}, for $k,l \geq 0$ we can compute
\[
\begin{aligned} G(\nu^k\circ\bar{\nu}^l(L)) & = & R^2_{\nu^k\circ\bar{\nu}^l(L)}(z) + p(a^{-1}) \\
																& = & z^{-k-l} R^2_L(z) + p(a^{-1}).
																\end{aligned}\]
On the other hand, using equation (\ref{eq:LMerid}) and Theorem \ref{lem:HFEig} we have
\[
\begin{aligned} G(\nu^k\circ\bar{\nu}^l(L)) & = & G(a^{-k-l} \varphi^k\bar{\varphi}^l(L)) \\
																& = & \sum_{\lambda,\mu} a^{-k-l} G(f_{\lambda,\mu} (c_{\lambda,\mu})^k (c_{\mu, \lambda})^l Q_{\lambda,\mu}) \\
																& = & \sum_{\lambda,\mu} a^{-k-l} (c_{\lambda,\mu})^k (c_{\mu, \lambda})^l \left[f_{\lambda,\mu} G(Q_{\lambda,\mu}) \right].
																\end{aligned}\]

Introducing notation $x_{\lambda,\mu} = f_{\lambda,\mu} G(Q_{\lambda,\mu}) \in R$, we see that for all $k,l \geq 0$
\begin{equation} \label{eq:LinSystem}
\sum_{\lambda,\mu} a^{-k-l} (c_{\lambda,\mu})^k (c_{\mu, \lambda})^l \cdot x_{\lambda, \mu} = z^{-k-l} R^1_L(z) + p(a^{-1}).
\end{equation}

Let $n$ be the largest power of $a$ appearing in any of the $x_{\lambda, \mu}$, and denote the coefficient of $a^n$ in $x_{\lambda, \mu}$ by $w_{\lambda, \mu} \in R_0$. 
Now, Theorem \ref{lem:HFEig}  shows that the multiplier of $x_{\lambda, \mu}$ in (\ref{eq:LinSystem}) has the form  
\[a^{-k-l} (c_{\lambda,\mu})^k (c_{\mu, \lambda})^l = (zC_\lambda(s^2) + z^{-1})^k \, (z C_\mu(s^2) + z^{-1})^l + p(a^{-1}).
\]  
Therefore, equating coefficients of $a^n$ in (\ref{eq:LinSystem}) gives 
\begin{equation} \label{eq:LinSystem2}
\sum_{\lambda,\mu} (zC_\lambda(s^2) + z^{-1})^k \, (z C_\mu(s^2) + z^{-1})^l \cdot w_{\lambda, \mu} = 0
\end{equation}
if $n>0$, and
\begin{equation} \label{eq:LinSystem3}
\sum_{\lambda,\mu} (zC_\lambda(s^2) + z^{-1})^k \, (z C_\mu(s^2) + z^{-1})^l \cdot w_{\lambda, \mu} = z^{-k-l}R^2_L(z)
\end{equation}
if $n =0$.

Let $S$ denote a finite collection of partitions such that $x_{\lambda, \mu} \neq 0$ implies  $\lambda, \mu \in S$.  The $w_{\lambda, \mu}$ satisfy the system of linear equations (\ref{eq:LinSystem2}) or (\ref{eq:LinSystem3}) where we take $0 \leq k, l \leq |S|-1$ so that we have as many equations as unknowns.  Then, the coefficient matrix $A= (a_{(k,l), (\lambda,\mu)})$ has entries $a_{(k,l),(\lambda,\mu)} = (z C_\lambda(s^2) + z^{-1})^k\,(z C_\mu(s^2) + z^{-1})^l $.  Thus, $A$ is the Kronecker product (tensor product) of the matrix $B = (b_{k,\lambda})$  with itself where $b_{k, \lambda} = (z C_\lambda(s^2) + z^{-1})^k$.  The matrix $B$ is a Vandermonde matrix which is non-singular since the content polynomials $C_\lambda(s)$ are uniquely determined by the partition $\lambda$.  Therefore, $A$ is non-singular as well, and the system of linear equations (\ref{eq:LinSystem2}) or (\ref{eq:LinSystem3}) has at most one solution.  

If $n >0$, the solution to (\ref{eq:LinSystem2}) is $w_{\lambda, \mu} =0$ which would contradict the choice of $n$.  Hence, $n \leq 0$, and the constant terms of the $x_{\lambda, \mu}$ must equal the unique solution to (\ref{eq:LinSystem3}) which is $w_{\emptyset,\emptyset} =R^2_L(z)$ and $w_{\lambda, \mu} =0$ when $\lambda$ or $\mu$ is non-empty.  We have thus shown that $x_{\emptyset,\emptyset} = R^2_L(z) + p(a^{-1})$.  This completes the proof since according to Theorem \ref{the:F1}
\[
F(L) = f_{\emptyset,\emptyset} = f_{\emptyset,\emptyset} G(1) = f_{\emptyset,\emptyset} G(Q_{\emptyset,\emptyset}) = x_{\emptyset,\emptyset},
\]
and $G(L)$ also has the form $R^2_L(z) + p(a^{-1})$ by Theorem \ref{thm:R2}.
\end{proof}

\begin{corollary} \label{cor:poly} For any framed link $L \subset S^1\times S^2$, the HOMFLY-PT invariant, $f(L)$, is a Laurent polynomial in $a$ and $z$.
\end{corollary}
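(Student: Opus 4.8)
The plan is to read this off directly from Theorem~\ref{the:HOMFLY}, using in addition the fact --- recalled in Section~\ref{sec:TuraevB} --- that Turaev's geometric basis is a basis for $\hfs$ already over the ring $\Z[a^{\pm1},z^{\pm1}]$. So the first step is to move $L$ into the solid torus: choose a framed link $\widetilde L\subset S^1\times D^2$ with $i_*[\widetilde L]=[L]$ in $S(S^1\times S^2)$, which is possible since every link in $S^1\times S^2$ isotopes into $S^1\times D^2$. Then $f(L)=f(i_*[\widetilde L])=F([\widetilde L])$ by the definition $F=f\circ i_*$, so it is enough to show $F([\widetilde L])\in\Z[a^{\pm1},z^{\pm1}]$.

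Second, I would expand $[\widetilde L]$ in Turaev's basis. Since $\{A_\lambda A_{-\mu}\}$ is a $\Z[a^{\pm1},z^{\pm1}]$-basis of the skein module, the class of $\widetilde L$ is a finite linear combination
\[
[\widetilde L]=\sum_{\lambda,\mu}c_{\lambda,\mu}\,A_\lambda A_{-\mu},\qquad c_{\lambda,\mu}\in\Z[a^{\pm1},z^{\pm1}]
\]
(one can also see this concretely: reducing any diagram of $\widetilde L$ to the basis by the skein relations only introduces the scalars $a^{\pm1}$, $z^{\pm1}$, and $(a-a^{-1})z^{-1}$, all of which lie in $\Z[a^{\pm1},z^{\pm1}]$). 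Applying $F$ and invoking Theorem~\ref{the:HOMFLY} gives
\[
f(L)=F([\widetilde L])=\sum_{\lambda,\mu}c_{\lambda,\mu}\,F(A_\lambda A_{-\mu})=\sum_{\lambda,\mu}c_{\lambda,\mu}\,R^2_{A_\lambda A_{-\mu}}(z).
\]

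Third, each $R^2_{A_\lambda A_{-\mu}}(z)=\sum_\rho z^{j(\rho)}$ is, by definition, a Laurent polynomial in $z$ with non-negative integer coefficients: the sum is over the finite set of $2$-graded normal rulings, and each exponent $j(\rho)$ is an integer. Therefore $f(L)$ is a finite $\Z[a^{\pm1},z^{\pm1}]$-combination of elements of $\Z[z^{\pm1}]$, hence lies in $\Z[a^{\pm1},z^{\pm1}]$, as claimed. I do not expect any real obstacle: the substantive input is Theorem~\ref{the:HOMFLY}, and the only point that deserves a moment's care is that the change-of-basis coefficients $c_{\lambda,\mu}$ genuinely lie in $\Z[a^{\pm1},z^{\pm1}]$ rather than merely in the larger coefficient ring $R$, which is exactly what the integral refinement of Turaev's basis theorem provides.
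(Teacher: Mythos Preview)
Your argument is correct and essentially identical to the paper's own proof: isotope $L$ into $S^1\times D^2$, expand in Turaev's geometric basis with $\Z[a^{\pm1},z^{\pm1}]$ coefficients, apply Theorem~\ref{the:HOMFLY}, and note that each $R^2_{A_\lambda A_{-\mu}}(z)$ is a Laurent polynomial in $z$ with non-negative integer coefficients. The paper's proof is simply a three-line version of what you wrote.
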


\begin{proof}[Proof of Corollary \ref{cor:poly}]  Isotope $L$ into $S^1\times D^2$.  Then, $L$ may be written as a $\Z[a^{\pm 1}, z^{\pm 1}]$-linear combination of the $A_\lambda A_{-\mu}$.  
The result follows since the ruling polynomials $R^2_{A_\lambda A_{-\mu}}(z)$ are Laurent polynomials in $z$ with positive integer coefficients.
\end{proof}

As another corollary of Theorem \ref{the:HOMFLY} we extend the inequality from Theorem \ref{thm:CG}.  For a null-homologous Legendrian link $L \subset S^1 \times S^2$, the notions of Thurston-Bennequin and rotation number have existing meaning.  These invariants may be computed in the same manner as for links in $\st$ using a front diagram for $L$.  However, note that for a link that is nontrivial in homology such a diagrammatically defined Thurston-Bennequin number will not be  invariant under Move B; see \cite{G}, Section 2 for further discussion.

For a null-homologous framed link $L \subset S^1\times \R^2$ we can normalize the $S^1 \times S^2$ HOMFLY-PT invariant of $L$ using the writhe, $w(L)$, to obtain a polynomial, $\widetilde{F}_L = a^{-w(L)} F(L) \in \Z[a^{\pm 1}, z^{\pm 1}]$,  which is an invariant of (unframed) link types in $S^1\times S^2$.  

\begin{corollary} \label{cor:TBEst} For a null-homologous Legendrian link $L \subset S^1\times S^2$, we have the estimate
\[
\mathit{tb}(L) + |r(L)| \leq -\deg_a \widetilde{F}_L.
\]
Moreover, the coefficient of $a^{-\mathit{tb}(L)}$ in $\widetilde{F}_L$ is equal to the $2$-graded ruling polynomial $R^2_L(z)$.
\end{corollary}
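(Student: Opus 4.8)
The plan is to combine three earlier results: the identity $F=G$ of Theorem~\ref{the:HOMFLY}, the expansion $G(L)=R^2_L(z)+p(a^{-1})$ of Theorem~\ref{thm:R2}, and the $\st$ estimate of Theorem~\ref{thm:CG}, the last of these transported through the substitution $A_\lambda A_{-\mu}\mapsto R^2_{A_\lambda A_{-\mu}}(z)$ that defines $G$ on Turaev's basis. First I would set up the diagram: using Gompf's standard form, represent the null-homologous Legendrian $L\subset S^1\times S^2$ by a front diagram that is simultaneously the front diagram of a Legendrian link in $\st$, and recall (as noted just before the statement) that for such a diagram $\mathit{tb}(L)$, $r(L)$ and $\widetilde F_L$ are computed exactly as in the $\st$ case; null-homologicity is used precisely here, so that $\mathit{tb}(L)$ is well defined and invariant under Move~B. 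Since the contact framing of a Legendrian link is the blackboard framing of its $xy$-projection, $w(L)=\mathit{tb}(L)$, and hence $\widetilde F_L=a^{-\mathit{tb}(L)}F(L)$. The ``Moreover'' clause is then immediate: by Theorems~\ref{the:HOMFLY} and~\ref{thm:R2}, $F(L)=G(L)=R^2_L(z)+p(a^{-1})$, so $\widetilde F_L=a^{-\mathit{tb}(L)}R^2_L(z)+a^{-\mathit{tb}(L)}p(a^{-1})$, and since the second summand involves only powers of $a$ strictly below $-\mathit{tb}(L)$, the coefficient of $a^{-\mathit{tb}(L)}$ in $\widetilde F_L$ is exactly $R^2_L(z)$.

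For the degree estimate, note that Theorem~\ref{thm:R2} alone only yields $\deg_a F(L)\le 0$, hence $\mathit{tb}(L)\le -\deg_a\widetilde F_L$, which is weaker than the claimed bound; the term $|r(L)|$ must come from the $\st$ HOMFLY estimate. The key point is that $G$ does not affect the variable $a$. Writing $[L]=\sum_{\lambda,\mu}h_{\lambda,\mu}(a,z)\,A_\lambda A_{-\mu}$ in $\hfs$, so that $H_L=\sum_{\lambda,\mu}h_{\lambda,\mu}\,A_\lambda A_{-\mu}$, we get $F(L)=G([L])=\sum_{\lambda,\mu}h_{\lambda,\mu}(a,z)\,R^2_{A_\lambda A_{-\mu}}(z)$. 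Each $R^2_{A_\lambda A_{-\mu}}(z)$ lies in $\Z[z^{\pm1}]$, so each summand has $a$-degree at most $\deg_a h_{\lambda,\mu}$, and therefore $\deg_a F(L)\le\deg_a H_L$. On the other hand Theorem~\ref{thm:CG} gives $\deg_a P_L\le -\mathit{tb}(L)-|r(L)|$, and since $H_L=a^{w(L)}P_L=a^{\mathit{tb}(L)}P_L$ this reads $\deg_a H_L\le -|r(L)|$. Combining, $\deg_a\widetilde F_L=-\mathit{tb}(L)+\deg_a F(L)\le -\mathit{tb}(L)+\deg_a H_L\le -\mathit{tb}(L)-|r(L)|$, i.e.\ $\mathit{tb}(L)+|r(L)|\le -\deg_a\widetilde F_L$ (trivially so when $\widetilde F_L=0$).

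I do not expect a serious obstacle here: the substantive content is already packaged in Theorems~\ref{the:HOMFLY}, \ref{thm:R2} and~\ref{thm:CG}, and no new geometric input is needed. The two points that I would make sure to state carefully are (i) the reduction step, namely that passing from $L\subset S^1\times S^2$ in Gompf standard form to the associated Legendrian front in $\st$ changes neither $\mathit{tb}(L)$, nor $r(L)$, nor the value of $\widetilde F_L$, and (ii) the elementary but essential observation that applying $G$ — equivalently, substituting Laurent polynomials in $z$ for the basis links $A_\lambda A_{-\mu}$ — can only lower, never raise, the $a$-degree, so that $\deg_a F(L)\le\deg_a H_L$ genuinely holds even in the presence of cancellation.
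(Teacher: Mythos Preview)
Your argument is correct and is essentially the paper's own proof, spelled out in full: the paper derives the inequality by noting that Theorem~\ref{the:HOMFLY} (i.e.\ the substitution $A_\lambda A_{-\mu}\mapsto R^2_{A_\lambda A_{-\mu}}(z)$) gives $\deg_a \widetilde F_L\le \deg_a P_L$ and then applies Theorem~\ref{thm:CG}, and it obtains the ``moreover'' clause by combining Theorems~\ref{thm:R2} and~\ref{the:HOMFLY}. Your explicit justification that $w(L)=\mathit{tb}(L)$ and that the $a$-degree cannot increase under $G$ is exactly the content the paper leaves implicit.
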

\begin{proof}
The first statement follows from Theorem \ref{thm:CG} since Theorem \ref{the:HOMFLY} shows that $\deg_a \widetilde{F}_L \leq \deg_a P_L$.  The second statement is a combination of Theorem \ref{thm:R2} and Theorem \ref{the:HOMFLY}.
\end{proof}

\subsection{Computation}

The $2$-graded ruling polynomials of the basis links $A_\lambda A_{-\mu}$ are computed in \cite{R2}.  In combination with Theorem \ref{the:HOMFLY}, this allows computation of the $S^1\times S^2$ HOMFLY-PT polynomial since it is algorithmic to write a given link as a linear combination of the $A_\lambda A_{-\mu}$.  We now recall the formula for $R^2_{A_\lambda A_{-\mu}}(z)$ (\cite{R2}, Theorem 4.2) which is simplified a bit by Lemma 3.2 of \cite{LR}.

For $m \geq 1$, we let
\[
\langle m \rangle = \sum_{k=0}^{m-1} {m+k \choose 2k+1} z^{2k}, 
\]
and make the convention that $\langle 0 \rangle = z^{-2}$.

\begin{theorem}[\cite{R2}] \label{thm:R2Basic} Let $\lambda$ and  $\mu$ be partitions with $\lambda = (\lambda_1, \ldots, \lambda_\ell)$ and $\mu = (\mu_1, \ldots, \mu_k)$.  Denote by $M_{\lambda, \mu}$ the set of $\ell \times k$ matrices with non-negative integer entries such that entries in the $i$-th row sum to $\lambda_i$ and entries in the $j$-th column sum to $\mu_j$.  Then,
$$
\displaystyle
R^2_{A_\lambda A_{-\mu}}(z) = z^{2 \ell k - \ell - k} \sum_{(b_{ij}) \in M_{\lambda, \mu} } \prod_{i, j} \langle b_{ij} \rangle.
$$
\end{theorem}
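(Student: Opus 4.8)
The plan is to prove the formula by directly enumerating the $2$-graded normal rulings of a convenient Legendrian front for $A_\lambda A_{-\mu}$. I would take the front obtained by stacking the cusp-free closed-braid fronts of $A_{\lambda_1},\dots,A_{\lambda_\ell},A_{-\mu_1},\dots,A_{-\mu_k}$: the component $A_{\lambda_i}$ contributes $\lambda_i$ rightward strands (joined into one curve by its $\lambda_i-1$ self-crossings), $A_{-\mu_j}$ contributes $\mu_j$ leftward strands, the $\lambda$-block lies entirely above the $\mu$-block, and the only crossings are these self-crossings. Since this front has no cusps, $j(\rho)$ is simply the number of switches of $\rho$, and its skein class is exactly $[A_\lambda A_{-\mu}]$. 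The first observation is that $2$-gradedness forces $\rho_x$ to match each rightward point with a leftward point, so in particular $R^2_{A_\lambda A_{-\mu}}(z)=0$ unless $|\lambda|=|\mu|$ --- matching $M_{\lambda,\mu}=\emptyset$ in that case. When $|\lambda|=|\mu|$, one checks that passing a self-crossing (switch or not) never changes which rightward strand is matched to which leftward strand; hence $\rho$ determines a well-defined bijection between the strand-set of $A_{\lambda_i}$ and the strand-sets of the $A_{-\mu_j}$, and I would record it by the matrix $b(\rho)=(b_{ij})$, where $b_{ij}$ is the number of strands of $A_{\lambda_i}$ matched to strands of $A_{-\mu_j}$. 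Then $\sum_j b_{ij}=\lambda_i$ and $\sum_i b_{ij}=\mu_j$, i.e.\ $b(\rho)\in M_{\lambda,\mu}$.

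The heart of the argument is a block decomposition: after an isotopy of the front that separates, for each pair $(i,j)$, the $b_{ij}$ mutually matched strands into its own disjoint box, a $2$-graded normal ruling with bookkeeping matrix $(b_{ij})$ amounts to an independent choice, for every $(i,j)$, of a $2$-graded normal ruling of the standard front of $A_{b_{ij}}A_{-b_{ij}}$ inside box $(i,j)$, together with a fixed collection of $2\ell k-\ell-k$ crossings --- created by the re-routing that distributes each component's strands among its boxes --- which are forced to be switches in every normal ruling. Granting this, switch counts are additive, so $j(\rho)=\big(2\ell k-\ell-k\big)+\sum_{i,j}j(\rho_{ij})$; summing $z^{j(\rho)}$ first over the box rulings and then over $(b_{ij})\in M_{\lambda,\mu}$ yields $z^{2\ell k-\ell-k}\sum_{(b_{ij})}\prod_{i,j}R^2_{A_{b_{ij}}A_{-b_{ij}}}(z)$, where an empty box ($b_{ij}=0$) contributes the ``renormalized'' factor $z^{-2}$ coming from the simplification of the re-routing in that case --- which is exactly the convention $\langle 0\rangle=z^{-2}$. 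It remains to identify the nonempty-box factor: the base case $R^2_{A_m A_{-m}}(z)=\langle m\rangle$ is obtained by resolving the innermost self-crossing of $A_m$ as a switch or a non-switch, which produces the linear recursion $\langle m\rangle=(2+z^2)\langle m-1\rangle-\langle m-2\rangle$ ($m\ge 3$), and together with $\langle 1\rangle=1,\ \langle 2\rangle=2+z^2$ this pins down the closed form $\sum_{k=0}^{m-1}\binom{m+k}{2k+1}z^{2k}$; alternatively one quotes the skein relations satisfied by $R^2$.

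The step I expect to be the main obstacle is the \emph{rigidity and completeness} of the block decomposition: showing that every $2$-graded normal ruling really is of the above form, so that nothing is over- or under-counted. Concretely one must forbid ``stray'' switches --- for instance a switch between two strands of $A_{\lambda_i}$ whose companions lie in distinct boxes in an incompatible vertical arrangement --- and conversely show that any admissible tuple of box rulings glues to a genuine normal ruling on the full front. As in the proof of Theorem \ref{thm:Invariance}, the normality condition is the tool: following the two strands emerging from a hypothetical bad switch, normality forces their companion strands to keep descending (or ascending) past each subsequent crossing, and since there are only finitely many crossings one eventually produces two $\rho$-matched strands that are compelled to intersect --- a contradiction. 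A secondary, purely combinatorial obstacle is the bookkeeping of the exponent: verifying that the re-routing contributes exactly the constant $2\ell k-\ell-k$ and that an empty box forces the ``$-2$'' correction, which becomes routine once the separated front is fixed. The final step is the repackaging of this computation into the uniform $\langle m\rangle$-notation, i.e.\ the passage from \cite{R2}, Theorem 4.2 to \cite{LR}, Lemma 3.2.
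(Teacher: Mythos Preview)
The paper does not prove Theorem \ref{thm:R2Basic}; it is quoted from \cite{R2} (Theorem 4.2 there, in the form simplified by \cite{LR}, Lemma 3.2) and is used purely as computational input for the example at the end of Section 4. There is therefore no proof in the present paper to compare your argument against.

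As a reconstruction of the underlying combinatorial argument your outline is reasonable, and you have correctly located the crux in the rigidity of the block decomposition. One point to sharpen: the ``isotopy of the front'' you invoke to separate the $(i,j)$-boxes depends on the matrix $(b_{ij})$, so it cannot be a single Legendrian isotopy applied uniformly. Invariance of $R^2$ under Legendrian isotopy gives a bijection of \emph{all} rulings between two fronts, not one that a priori respects a fixed bookkeeping matrix; appealing to isotopy invariance at this step would be circular. The argument has to be carried out on one fixed front: for each $(b_{ij})\in M_{\lambda,\mu}$ one enumerates directly the rulings of that front with that bookkeeping matrix and shows, via the normality chase you describe, that they factor as a product of independent box choices together with a fixed set of forced switches. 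That direct enumeration, not an isotopy, is where the actual work in \cite{R2} lies. Your identification of the base case $R^2_{A_mA_{-m}}(z)=\langle m\rangle$ and of the final repackaging as the content of \cite{LR}, Lemma 3.2, is accurate.
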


\begin{figure}
\[\begin{array}{ccc} \includegraphics[scale=.6]{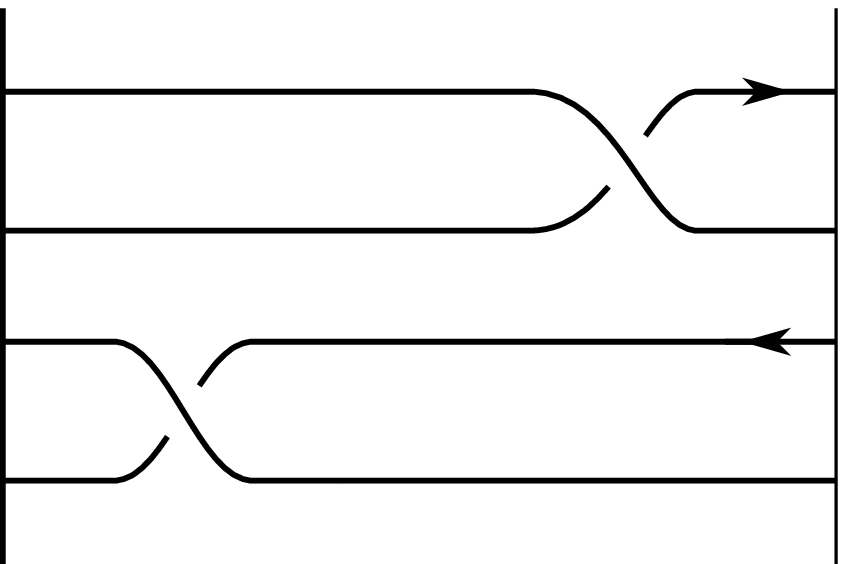} & \quad & \includegraphics[scale=.6]{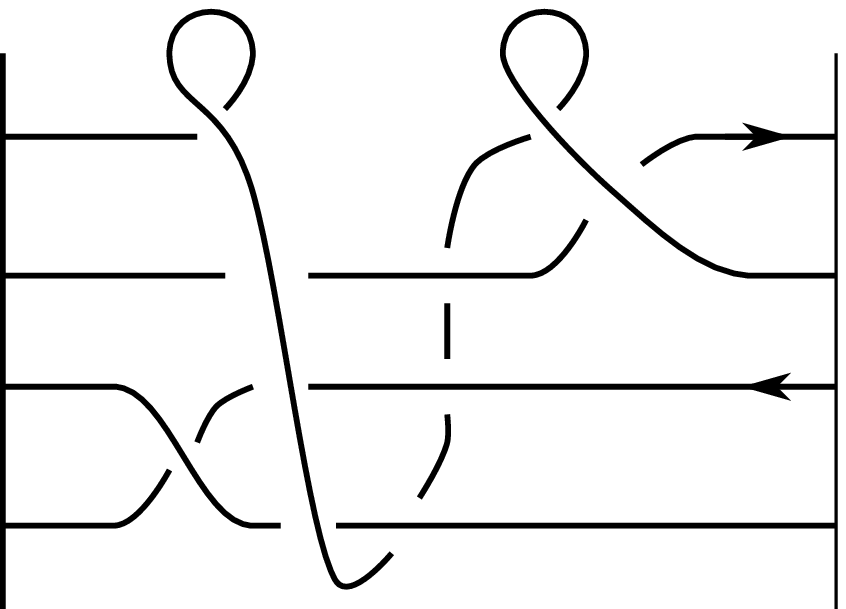} \\
L_1 & \quad & L_2 \end{array} 
\]
\caption{The links $L_1$ and $L_2$.}
\label{fig:Example}
\end{figure}

\begin{example} The links $L_1, L_2 \subset S^1 \times D^2$ pictured in Figure \ref{fig:Example} are isotopic in $S^1\times S^2$ by an application of Move A.   In $\hfs$, $L_1 = A_2 A_{-2}$, and a somewhat laborious computation with skein relations shows that
\[
L_2 = a^2(1+z^2)\, A_2 A_{-2} + a^2 z \, A_{(1,1)} A_{-2} + \big(-a^2[2z^2 +z^4] -  z^2 \big) A_1A_{-1} + (-a^2 +1)(2+2z^2).
\]
Using Theorem \ref{thm:R2Basic}, we have 
\[
R^2_{A_2 A_{-2}}(z) = 2+z^2, \quad R^2_{A_{(1,1)}A_{-2}}(z) = z, \quad R^2_{A_1A_{-1}}(z) =1.
\]
Together with Theorem \ref{the:HOMFLY} this allows us to directly verify the equality
\[
F(L_1)  = 2+z^2 = F(L_2).
\]
\end{example}

\section{The $S^1\times S^2$ HOMFLY-PT polynomial: Alternate approach}

In this section we provide a second proof of Theorem \ref{the:HOMFLY}.  This alternate approach is mostly independent of Legendrian knot theory, including the proof from \cite{R2} of the requisite Theorem \ref{thm:InnerProduct}.  Instead, we appeal to results concerning symmetric functions from the work of Koike \cite{K} to provide explicit transition formulas between the bases $\{Q_{\lambda, \mu}\}$ and $\{Q_\lambda Q^*_\mu \}.$

\subsection{Relation with symmetric functions}

The subalgebra $\C^+ \subset \hfs$ is isomorphic to the ring of symmetric functions $\Lambda({\bf x})$ in an infinite collection of variables ${\bf x} = (x_1, x_2, \ldots)$ with coefficients in $R$.  A standard choice of isomorphism, used for instance in \cite{AM} and \cite{Lu}, is given by identifying the skein elements $h_n$ with the complete homogeneous symmetric functions.  Under this correspondence,  $Q_\lambda$ corresponds to the Schur function, $s_\lambda({\bf x})$, since the determinant formula for $Q_\lambda$ agrees with the Jacobi-Trudy identity.  

For notational clarity, let ${\bf y} = (y_1, y_2, \ldots)$ denote a second collection of variables.  In view of (\ref{eq:AnIso}), we have an algebra isomorphism
\[
\Phi: \hfs \rightarrow \Lambda({\bf x}) \otimes_R \Lambda({\bf y}), \quad Q_\lambda Q_\mu^* \mapsto s_\lambda({\bf x}) \otimes s_\mu({\bf y}).
\]

The relation between the meridian eigenvector basis, $\{Q_{\lambda, \mu}\}$, and the product basis, $\{Q_\lambda Q^*_{\mu}\}$, involves the Littlewood-Richardson coefficients,  $LR_{\lambda, \mu}^\nu$, which may be defined as the structure constants: 
\begin{equation} \label{eq:LR}
s_\lambda({\bf x}) \cdot s_\mu({\bf x}) = \sum_{\nu} LR_{\lambda, \mu}^\nu s_\nu({\bf x}).
\end{equation}

\begin{lemma}[\cite{K}] \label{lem:Koike} In $\hfs$, we have
\[
Q_\lambda Q_\mu^* = \sum_{\tau, \nu, \xi} LR^\lambda_{\nu, \tau} LR^\mu_{\xi, \tau} Q_{\nu, \xi}.
\]
\end{lemma}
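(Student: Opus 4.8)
The plan is to transport the identity across the algebra isomorphism $\Phi: \hfs \rightarrow \Lambda({\bf x}) \otimes_R \Lambda({\bf y})$ and recognize the resulting statement as one of Koike's change-of-basis formulas relating products of Schur functions to universal characters. Applying $\Phi$ to the left-hand side gives, by the very definition of $\Phi$, the element $s_\lambda({\bf x}) \otimes s_\mu({\bf y})$. The crucial structural input is the identification of $\Phi(Q_{\nu,\xi})$ for the meridian eigenvector basis elements. Since $\Phi$ is an algebra isomorphism with $\Phi(h_n) = h_n({\bf x})$ and $\Phi(h_n^*) = h_n({\bf y})$ (using $h_n = Q_{(n)}$, $h_n^* = Q^*_{(n)}$ together with $s_{(n)} = h_n$), and since $Q_{\nu,\xi}$ is by definition a single determinant in the $h_n$ and $h_n^*$, applying $\Phi$ to that determinant produces the corresponding block determinant whose entries are complete homogeneous symmetric functions in the two separate alphabets ${\bf x}$ and ${\bf y}$. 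This is precisely Koike's \emph{universal character} (rational Schur function) $S_{[\nu;\xi]}({\bf x};{\bf y})$. I would isolate this as the key identification $\Phi(Q_{\nu,\xi}) = S_{[\nu;\xi]}$, proved by a direct entry-by-entry comparison of the two determinants.

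With this identification in place, the asserted skein identity is equivalent, after applying $\Phi$, to the symmetric function statement
\[
s_\lambda({\bf x})\, s_\mu({\bf y}) = \sum_{\tau,\nu,\xi} LR^\lambda_{\nu,\tau}\, LR^\mu_{\xi,\tau}\; S_{[\nu;\xi]},
\]
which is exactly Koike's formula \cite{K} expanding a product of ordinary Schur functions, one taken in each alphabet, in the universal character basis. Equivalently, under the specialization to $GL_N$ this is the stable decomposition of the mixed tensor product $V_\lambda \otimes V_\mu^*$ into irreducible rational modules $V_{[\nu;\xi]}$, whose multiplicity is the familiar sum $\sum_\tau LR^\lambda_{\nu,\tau} LR^\mu_{\xi,\tau}$; the coefficients $LR$ are the Littlewood--Richardson numbers of (\ref{eq:LR}). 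Applying $\Phi^{-1}$, which returns $s_\lambda({\bf x}) s_\mu({\bf y}) \mapsto Q_\lambda Q_\mu^*$ and $S_{[\nu;\xi]} \mapsto Q_{\nu,\xi}$, then yields the stated identity in $\hfs$.

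I expect the only genuine obstacle to lie in the bookkeeping behind the identification $\Phi(Q_{\nu,\xi}) = S_{[\nu;\xi]}$. One must verify that the specific layout prescribed in the definition of $Q_{\nu,\xi}$, namely the $h^*$-rows with subscripts decreasing left to right stacked above the $h$-rows with subscripts increasing left to right, matches Koike's convention for the universal character determinant, so that the translation introduces no transpose of the partitions $\nu$, $\xi$ and no swap of the roles of the two alphabets ${\bf x}$ and ${\bf y}$. Once these conventions are reconciled and the two determinants are matched, the remainder of the argument is a direct citation of Koike's expansion followed by the routine application of $\Phi^{-1}$.
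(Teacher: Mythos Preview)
Your overall strategy coincides with the paper's: transport the identity through $\Phi$, identify $\Phi(Q_{\nu,\xi})$ with Koike's universal characters $[\nu,\xi]_{GL}$, and then cite Koike's expansion of $s_\lambda({\bf x})\,s_\mu({\bf y})$ in that basis. The one substantive point where your sketch falls short is the identification step, which you describe as ``a direct entry-by-entry comparison of the two determinants.'' It is not. Koike's determinant (Definition~2.1 of \cite{K}) is built from the \emph{elementary} symmetric functions $e_n$ and is indexed by the \emph{conjugate} partitions $\lambda',\mu'$, whereas the Hadji--Morton determinant defining $Q_{\lambda,\mu}$ uses the complete homogeneous functions $h_n$ with $\lambda,\mu$ themselves. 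So the matrices do not match entry by entry.

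The paper closes this gap with the standard involution $\iota:\Lambda\to\Lambda$ satisfying $\iota(e_n)=h_n$ and $\iota(s_\lambda)=s_{\lambda'}$. Writing $I=\iota\otimes\iota$, one sees from the definitions that $I([\lambda,\mu]_{GL})=\Phi(Q_{\lambda',\mu'})$; the remaining task is to show $I([\lambda,\mu]_{GL})=[\lambda',\mu']_{GL}$, which the paper deduces from a second identity in \cite{K} (the expansion of $[\lambda,\mu]_{GL}$ in Schur functions) together with the symmetry $LR^{\nu}_{\lambda,\mu}=LR^{\nu'}_{\lambda',\mu'}$. In short, the ``bookkeeping obstacle'' you anticipate is genuine, it does introduce a transpose of the partitions, and its resolution requires this involution argument rather than a mere inspection of conventions.
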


\begin{proof}  In \cite{K}, certain elements $[\nu,\xi]_{GL} \in \Lambda_x \otimes \Lambda_y$ are defined for partitions $\nu, \xi$, and the identity
\[
s_\lambda(x) \otimes s_\mu(y)  = \sum_{\tau, \nu, \xi} LR^\lambda_{\nu, \tau} LR^\mu_{\xi, \tau} [\nu, \xi]_{GL}
\]
is established in \cite{K} Theorem 2.3.  (Note that there is typo in the statement of \cite{K} Theorem 2.3.  The original statement of the identity at \cite{K}, page 59,  equation (0.2) is correct.)  

The result follows from applying $\Phi^{-1}$ provided we can show that $\Phi(Q_{\lambda, \mu}) = [\lambda, \mu]_{GL}$.  The elements $[\lambda, \mu]_{GL}$ are defined in Definition 2.1 of \cite{K} using a determinant formula of similar nature to that defining $Q_{\lambda, \mu}$, but with two key differences.  First, the partitions $\lambda$ and $\mu$ are replaced with their conjugates $\lambda'$ and $\mu'$.  Second, the complete homogeneous symmetric functions $h_n$ are replaced with the elementary symmetric functions $e_n$.  

To see that this definition is equivalent, we make use of a well-known algebra involution $\iota: \Lambda \rightarrow \Lambda$ satisfying 
\begin{equation} \label{eq:invo}
\iota(e_n) = h_n \quad \mbox{and} \quad \iota(s_\lambda) = s_{\lambda'}.
\end{equation}
  The first of these equalities is usually taken as the definition of $\iota$; see \cite{St}.  

The tensor product $I = \iota \otimes \iota$ produces an involution on $\Lambda_x \otimes \Lambda_y$, and from (\ref{eq:invo}) and the respective definitions we have
\[
I([\lambda, \mu]_{GL}) = \Phi(Q_{\lambda',\mu'}).
\]
On the other hand, Theorem 2.3 of \cite{K} also contains the identity
\[
[\lambda, \mu]_{GL} = \sum_{\tau, \nu, \xi} (-1)^{|\tau|} LR_{\tau, \nu}^\lambda LR_{\tau', \xi}^\mu s_{\nu}(x) s_{\xi}(y).
\]
Now,  applying $\iota$ to equation (\ref{eq:LR}) shows that $LR_{\lambda, \mu}^{\nu} = LR_{\lambda',\mu'}^{\nu'}$, and making use of this observation we compute 
\[
I([\lambda, \mu]_{GL}) = \sum_{\tau, \nu, \xi} (-1)^{|\tau|} LR_{\tau, \nu}^\lambda LR_{\tau', \xi}^\mu s_{\nu'}(x) s_{\xi'}(y) =
\]
\[
\sum_{\tau, \nu, \xi} (-1)^{|\tau'|} LR_{\tau', \nu'}^{\lambda'} LR_{\tau, \xi'}^{\mu'} s_{\nu'}(x) s_{\xi'}(y) =[\lambda', \mu']_{GL}.
\]
(For the last equality one replaces the summation variables $\tau, \nu,$ and $\xi$ with their conjugates.) 

Combining the previous calculations gives $\Phi(Q_{\lambda', \mu'}) = [\lambda', \mu']_{GL}$ as desired.
\end{proof}

\begin{remark}  The connection between the $Q_{\lambda, \mu}$ and the bases considered by Koike is implicitly pointed out in \cite{HM}.
\end{remark}

\begin{proof}[Alternate proof of Theorem \ref{the:HOMFLY}.]
For any partitions $\lambda$ and $\mu$,
combining Theorem \ref{the:F1} and Lemma \ref{lem:Koike} gives
\[
F(Q_{\lambda}Q^*_{\mu}) = \sum_{\tau} LR^\lambda_{\emptyset, \tau}\, LR^\mu_{\emptyset,\tau} = \sum_{\tau} \delta_{\lambda, \tau}\, \delta_{\mu_,\tau} = \delta_{\lambda, \mu}.
\]
The second equality follows from (\ref{eq:LR}) since $s_\emptyset = 1$.  According to Theorem \ref{thm:InnerProduct}, this shows that $F$ agrees with the homomorphism $G$ from Section \ref{sec:NR}.  This is the desired result. 
\end{proof}

\end{document}